\newtheorem{thm}{Theorem}
\newtheorem{cor}[thm]{Corollary}
\newtheorem{lem}[thm]{Lemma}
\newtheorem{prop}[thm]{Proposition}
\newtheorem{remark}[thm]{Remark}
\newcommand{\D}{\mathcal D^{1,2}(\R^d)}
\renewcommand{\L}{\mathrm L}
\newcommand{\N}{\mathbb N}
\newcommand{\R}{\mathbb R}
\renewcommand{\S}{\mathbb S}
\newcommand{\be}[1]{\begin{equation}\label{#1}}
\newcommand{\ee}{\end{equation}}
\renewcommand{\(}{\left(}
\renewcommand{\)}{\right)}
\newcommand{\ird}[1]{\int_{\R^d}{#1}\;dx}
\newcommand{\irt}[1]{\int_{\R}{#1}\;dt}
\newcommand{\irdeux}[1]{\int_{\R^2}{#1}\;dx}
\newcommand{\irdmu}[1]{\int_{\R^2}{#1}\;d\mu}
\newcommand{\irdmua}[1]{\int_{\R^2}{#1}\;d\mu_\alpha}
\newcommand{\nrm}[2]{\|{#1}\|_{\L^{#2}(\R^d)}}
\newcommand{\nrmdeux}[2]{\|{#1}\|_{\L^{#2}(\R^2)}}
\newcommand{\scalar}[2]{\langle{#1},{#2}\rangle}
\newcommand{\Sd}{\mathsf{S}_d}
\newcommand{\isd}[1]{\int_{\S^d}{#1}\;d\sigma_d}
\begin{document}

\begin{frontmatter}



\title{Sobolev and Hardy-Littlewood-Sobolev inequalities}
\author{Jean~Dolbeault}
\ead{dolbeaul@ceremade.dauphine.fr}
\ead[url]{http://www.ceremade.dauphine.fr/$\sim\kern1.5pt$dolbeaul/}
\author{Gaspard~Jankowiak}
\ead{jankowiak@ceremade.dauphine.fr}
\ead[url]{http://gjankowiak.github.io/}
\address{Ceremade, Universit\'e Paris-Dauphine, Place de Lattre de Tassigny, 75775 Paris C\'edex~16, France.}


\begin{abstract} This paper is devoted to improvements of Sobolev and Onofri inequalities. The additional terms involve the dual counterparts, \emph{i.e.}~Hardy-Littlewood-Sobolev type inequalities. The Onofri inequality is achieved as a limit case of Sobolev type inequalities. Then we focus our attention on the constants in our improved Sobolev inequalities, that can be estimated by completion of the square methods. Our estimates rely on nonlinear flows and spectral problems based on a linearization around optimal Aubin-Talenti functions.
\end{abstract}

\begin{keyword}
Sobolev spaces \sep Sobolev inequality \sep Hardy-Littlewood-Sobolev inequality \sep logarithmic Hardy-Littlewood-Sobolev inequality \sep Onofri's inequality \sep Caffarelli-Kohn-Nirenberg inequalities \sep extremal functions \sep duality \sep best constants \sep stereographic projection \sep fast diffusion equation

\MSC[2010] 26D10 \sep 46E35 \sep 35K55

\end{keyword}

\end{frontmatter}

\section{Introduction}\label{Sec:Intro}

E.~Carlen, J.A.~Carrillo and M.~Loss noticed in \cite{CCL} that Hardy-Littlewood-Sobolev inequalities in dimension $d\ge3$ can be deduced from some special Gagliardo-Nirenberg inequalities using a fast diffusion equation. Sobolev's inequalities and Hardy-Littlewood-Sobolev inequalities are dual. A fundamental reference for this issue is E.H.~Lieb's paper \cite{MR717827}. This duality has also been investigated using a fast diffusion flow in \cite{Dolbeault2011}. Although \cite{CCL} has motivated~\cite{Dolbeault2011}, the two approaches are so far unrelated. Actually \cite{Dolbeault2011} is closely connected with the approach by Legendre's duality developed in \cite{MR717827}. We shall take advantage of this fact in the present paper and also use of the flow introduced in \cite{Dolbeault2011}.

\medskip For any $d\ge3$, the space $\mathcal D^{1,2}(\R^d)$ is defined as the completion of smooth solutions with compact support w.r.t.~the norm
\[
w\mapsto\|w\|:=\(\nrm{\nabla w}2^2+\nrm w{2^*}^2\)^{1/2}\,,
\]
where $2^*:=\frac{2\,d}{d-2}$. The Sobolev inequality in $\R^d$ is
\be{Ineq:Sobolev}
\mathsf{S}_d\,\nrm{\nabla u}2^2-\nrm u{2^*}^2 \ge0\quad\forall\,u\in\D\,,
\ee
where the best constant, or Aubin-Talenti constant, is given by
\[
\mathsf{S}_d=\frac{1}{\pi\,d\,(d-2)}\,\Big(\tfrac{\Gamma(d)}{\Gamma\(\frac{d}2\)}\Big)^\frac{2}d\
\]
(see~\ref{Sec:UsefulFormulae} for details). The optimal Hardy-Littlewood-Sobolev inequality
\be{Ineq:HLS}
\mathsf{S}_d\,\nrm{v}{\frac{2\,d}{d+2}}^2-\ird{v\,(-\Delta)^{-1}\,v}\ge0\quad\forall\;v\in\L^\frac{2\,d}{d+2}(\R^d)
\ee
involves the same best constant $\mathsf{S}_d$, as a result of the duality method of~\cite{MR717827}.  When $d\ge5$, using a well chosen flow, it has been established in~\cite{Dolbeault2011} that the l.h.s.~in~\eqref{Ineq:Sobolev} is actually bounded from below by the l.h.s.~in~\eqref{Ineq:HLS}, multiplied by some positive proportionality constant. In our first result, we will remove the technical restriction $d\ge5$ and cover all dimensions $d\ge3$. An elementary use of the duality method -- in fact a simple completion of the square method~-- provides a simple upper bound on the optimal proportionality constant in any dimension.
\begin{thm}\label{Thm:SquareSobolev} For any $d\ge3$, if $q=\frac{d+2}{d-2}$ the inequality
\begin{multline}\label{S-HLS}
\mathsf{S}_d\,\nrm{u^q}{\frac{2\,d}{d+2}}^2-\ird{u^q\,(-\Delta)^{-1}\,u^q}\\
\le\mathsf{C}_d\,\nrm u{2^*}^\frac{8}{d-2}\left[\mathsf{S}_d\,\nrm{\nabla u}2^2-\nrm u{2^*}^2\right]
\end{multline}
holds for any $u\in\D$ where the optimal proportionality constant $\mathsf{C}_d$ is such that
\[
\frac{d}{d+4}\,\mathsf{S}_d\le\mathsf{C}_d<\mathsf S_d\,.
\]\end{thm}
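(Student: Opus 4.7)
The plan is to obtain the upper bound $\mathsf{C}_d<\mathsf{S}_d$ by a completion-of-squares comparison between the two sides of~\eqref{S-HLS}, and to obtain the lower bound $\tfrac d{d+4}\,\mathsf{S}_d\le\mathsf{C}_d$ by testing the ratio on perturbations of an Aubin--Talenti extremal. With $q=\tfrac{d+2}{d-2}$, I record the arithmetic identities $q+1=2^*$, $q\cdot\tfrac{2d}{d+2}=2^*$, $2\cdot 2^*=2q+2$, and $2q-2=\tfrac 8{d-2}$; in particular $\ird{u^{q+1}}=\nrm u{2^*}^{2^*}$ and $\nrm{u^q}{\frac{2d}{d+2}}^2=\nrm u{2^*}^{2q}$.

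\textbf{Upper bound.} For any $\lambda\in\R$, expanding the trivial inequality
\[
0\le\ird{\bigl|(-\Delta)^{1/2}u-\lambda\,(-\Delta)^{-1/2}u^q\bigr|^2}=\nrm{\nabla u}2^2-2\lambda\,\nrm u{2^*}^{2^*}+\lambda^2\ird{u^q(-\Delta)^{-1}u^q}
\]
and optimizing in $\lambda$ yields the Cauchy--Schwarz-type estimate $\ird{u^q(-\Delta)^{-1}u^q}\ge\nrm u{2^*}^{2\cdot 2^*}/\nrm{\nabla u}2^2$. Substituting into the left-hand side of~\eqref{S-HLS} and factoring gives
\[
\mathsf{S}_d\,\nrm{u^q}{\frac{2d}{d+2}}^2-\ird{u^q(-\Delta)^{-1}u^q}\le\frac{\nrm u{2^*}^{2q}}{\nrm{\nabla u}2^2}\,\bigl(\mathsf{S}_d\,\nrm{\nabla u}2^2-\nrm u{2^*}^2\bigr).
\]
By~\eqref{Ineq:Sobolev}, $\nrm u{2^*}^2/\nrm{\nabla u}2^2\le\mathsf{S}_d$, and since $2q-2=\tfrac 8{d-2}$ I obtain $\mathsf{C}_d\le\mathsf{S}_d$. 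To upgrade this to strict inequality, I would note that simultaneous equality in the Cauchy--Schwarz step and in Sobolev forces $u$ to be an Aubin--Talenti profile, at which both sides of~\eqref{S-HLS} vanish; a spectral-gap statement for the Hessians of the Sobolev and HLS deficits at the manifold of extremals then rules out $\mathsf{C}_d=\mathsf{S}_d$.

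\textbf{Lower bound.} To prove $\tfrac d{d+4}\,\mathsf{S}_d\le\mathsf{C}_d$, I would test the ratio in~\eqref{S-HLS} along perturbations $u=u_*+\varepsilon\phi$ of the Aubin--Talenti profile $u_*(x)=(1+|x|^2)^{-(d-2)/2}$. Both the HLS deficit $\Psi(u):=\mathsf{S}_d\,\nrm{u^q}{\frac{2d}{d+2}}^2-\ird{u^q(-\Delta)^{-1}u^q}$ and the Sobolev deficit $\Phi(u):=\mathsf{S}_d\,\nrm{\nabla u}2^2-\nrm u{2^*}^2$ vanish at $u_*$ together with their first variations, so to leading order in $\varepsilon$ the ratio is governed by the two Hessians $\Psi''(u_*)$ and $\Phi''(u_*)$. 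After stereographic projection onto $\S^d$, both quadratic forms diagonalize in spherical harmonics; the kernel corresponds to the symmetries of the problem (scaling, translations, conformal modes), and the first mode orthogonal to it yields the sharp value $\tfrac d{d+4}\,\mathsf{S}_d$. Sending $\varepsilon\to 0$ along this direction produces a sequence saturating the lower bound.

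\textbf{Main obstacle.} The technical heart is the spectral computation on $\S^d$: writing both Hessians explicitly after stereographic projection, correctly isolating the symmetry-induced kernel, and identifying the first non-trivial spherical-harmonic direction that produces exactly the factor $\tfrac d{d+4}$. The strict upper bound $\mathsf{C}_d<\mathsf{S}_d$ drops out of the same spectral-gap analysis, since the supremum of $\Psi''(u_*)/\Phi''(u_*)$ (suitably normalized) is bounded away from $\mathsf{S}_d$ along every eigendirection.
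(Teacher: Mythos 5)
Your treatment of the non-strict upper bound and of the lower bound follows essentially the same route as the paper: the Cauchy--Schwarz step $\ird{u^q(-\Delta)^{-1}u^q}\ge\nrm u{2^*}^{2\cdot 2^*}/\nrm{\nabla u}2^2$ obtained by optimizing in $\lambda$ is equivalent to the paper's expansion of the square $\ird{|\mathsf{S}_d\,\nrm u{2^*}^{4/(d-2)}\nabla u-\nabla(-\Delta)^{-1}u^q|^2}\ge0$ (your version is in fact marginally sharper before you relax it with Sobolev), and the lower bound $\frac d{d+4}\,\mathsf{S}_d\le\mathsf{C}_d$ is obtained in the paper exactly as you propose, by second-order expansion of both deficits around the Aubin--Talenti profile, stereographic projection, and diagonalization in spherical harmonics, the first admissible mode $k=2$ giving $\mu_1\mu_2=d\,(d+2)^2\,(d+4)$ and hence the factor $d/(d+4)$.

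The genuine gap is the strict inequality $\mathsf{C}_d<\mathsf{S}_d$. Knowing that equality in your two estimates forces $u$ to be an Aubin--Talenti function, together with a spectral gap for the Hessians at the extremal manifold, does \emph{not} rule out $\mathsf{C}_d=\mathsf{S}_d$: the optimal constant is a supremum of the ratio over all of $\D$, and a maximizing sequence could a priori stay at positive distance from the extremal manifold while the ratio tends to $\mathsf{S}_d$ without ever being attained. Your spectral-gap argument only controls sequences that concentrate on the extremals; it says nothing about the complementary case. The paper closes this hole with a substantially different ingredient: it reinjects the (non-strict) inequality into the fast diffusion flow $\partial_tv=\Delta v^{(d-2)/(d+2)}$, derives a differential inequality for $\mathsf H$ as a function of $\mathsf J$, and obtains the globally improved inequality of Theorem~\ref{Thm:Improved}, in which the right-hand side is replaced by a strictly concave function $\varphi$ of the Sobolev deficit with $\varphi(x)<x$ for $x>0$. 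In the proof of Corollary~\ref{Cor:NonOpt}, a minimizing sequence with $\mathcal G[u_n^q]$ bounded away from zero is then excluded by this improvement, while a sequence with $\mathcal G[u_n^q]\to0$ is shown, via Lieb's characterization of the Hardy--Littlewood--Sobolev optimizers, to converge (up to translations and dilations) to $u_\star$, where your linearization takes over and yields $\mathsf{S}_d/\mathsf{C}_d\ge(d+4)/d>1$, a contradiction. Without either the flow-based improvement (or some other quantitative global estimate) and the compactness step, the strictness claim is unproved.
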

Inequality~\eqref{S-HLS} is obtained with $\mathsf{C}_d$ replaced by $\mathsf{S}_d$ by expanding a well chosen square in Section~\ref{Sec:Square}. The lower bound on $\mathsf C_d$ follows from an expansion of both sides of the inequality around the Aubin-Talenti functions, which are optimal for Sobolev and Hardy-Littlewood-Sobolev inequalities (see Section~\ref{Sec:Square} for more details), and spectral estimates that will be studied in Section~\ref{Sec:Linearization}: see Corollary~\ref{cor:linearization4}. The computation based on the flow as was done in \cite{Dolbeault2011} can be optimized to get an improved inequality compared to~\eqref{S-HLS}, far from the Aubin-Talenti functions: see Theorem~\ref{Thm:Improved} in Section~\ref{Sec:Improvements}. As a consequence, we also prove the strict inequality $\mathsf{C}_d<\mathsf{S}_d$.

\medskip In dimension \hbox{$d=2$}, consider the probability measure $d\mu$ defined by
\[
d\mu(x):=\mu(x)\,dx\quad\mbox{with}\quad \mu(x):=\frac{1}{\pi\,(1+|x|^2)^2}\quad\forall\;x\in\R^2.
\]
The Euclidean version of Onofri's inequality~\cite{MR677001}
\be{Ineq:Onofri}
\frac{1}{16\,\pi}\irdeux{|\nabla f|^2}-\log\(\irdmu{e^{\,f}}\)+\irdmu f\ge0\quad\forall\;f\in\mathcal D(\R^2)
\ee
plays the role of Sobolev's inequality in higher dimensions. Here the inequality is written for smooth and compactly supported functions in $\mathcal D(\R^2)$, but can be extended to the appropriate Orlicz space which corresponds to functions such that both sides of the inequality are finite.

This inequality is dual of the logarithmic Hardy-Littlewood-Sobolev inequality that can be written as follows: for any $g\in\L^1_+(\R^2)$ with $M=\irdeux g$, such that $g\,\log g$, $(1+\log|x|^2)\,g\in\L^1(\R^2)$, we have
\be{Ineq:logHLS}
\irdeux{g\,\log\(\frac{g}M\)}-\frac{4\,\pi}M\irdeux{g\,(-\Delta)^{-1}\,g}+M\,(1+\log\pi)\ge 0
\ee
with
\[
\irdeux{g\,(-\Delta)^{-1}\,g}=-\frac1{2\,\pi}\int_{\R^2\times\R^2}g(x)\,g(y)\,\log|x-y|\;dx\,dy\,.
\]
Then, in dimension $d=2$, we have an analogue of Theorem~\ref{Thm:SquareSobolev}, which goes as follows.
\begin{thm}\label{Thm:Onofri} The inequality
\begin{multline}\label{S-HLS-d=2}
\irdeux{g\,\log\(\frac{g}M\)}-\frac{4\,\pi}M\irdeux{g\,(-\Delta)^{-1}\,g}+M\,(1+\log\pi)\\
\le M\left[\frac1{16\,\pi}\,\nrmdeux{\nabla f}2^2+\irdmu f-\log M\right]
\end{multline}
holds for any function $f\in\mathcal D(\R^2)$ such that $M=\irdmu{e^{\,f}}$ and $g=e^f\,\mu$.\end{thm}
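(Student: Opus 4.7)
My plan is to obtain Theorem~\ref{Thm:Onofri} as the limit $d\to2$ of Theorem~\ref{Thm:SquareSobolev}, in keeping with the announcement in the abstract that Onofri's inequality arises as a limit case of Sobolev type inequalities. The idea is to plug a suitably normalized Aubin--Talenti extremal, perturbed by a profile depending on the given $f$, into~\eqref{S-HLS}, expand both sides in powers of $d-2$, and read off the Onofri and logarithmic Hardy--Littlewood--Sobolev deficits from the leading nontrivial coefficients.

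Concretely, for $d>2$ I would take the Aubin--Talenti profile $u_*(x)=c_d\,(1+|x|^2)^{-(d-2)/2}$ with $c_d$ chosen so that $u_*^{2^*}$ converges to $\mu$ as $d\to 2$, and set
\[
u_d(x):=u_*(x)\,\(1+\tfrac1{2^*}\,f(x)\).
\]
The identity $(1+\tfrac{1}{2^*}f)^{2^*}\to e^{f}$ as $d\to 2$ (\ie as $2^*\to\infty$) yields $u_d^{2^*}\to e^f\mu=g$ and $\nrm{u_d}{2^*}^{2^*}\to M$, so that the mass constraint in the Onofri formulation is built in. One then expands both sides of~\eqref{S-HLS} to order $(d-2)^2$ and divides by $(d-2)^2$ before taking the limit.

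For the Sobolev deficit $\mathsf S_d\,\nrm{\nabla u_d}2^2-\nrm{u_d}{2^*}^2$, the linear term in $d-2$ vanishes because $u_*$ saturates~\eqref{Ineq:Sobolev}, and a second-variation computation shows that the $(d-2)^2$ coefficient equals a positive constant times the Onofri functional
\[
\tfrac1{16\pi}\,\nrmdeux{\nabla f}2^2+\irdmu f-\log M\,.
\]
Performing the analogous expansion on the HLS deficit $\mathsf S_d\,\nrm{u_d^q}{\frac{2d}{d+2}}^2-\ird{u_d^q\,(-\Delta)^{-1}\,u_d^q}$ produces, up to the same positive constant, the logarithmic HLS deficit
\[
\irdeux{g\,\log(g/M)}-\tfrac{4\pi}M\irdeux{g\,(-\Delta)^{-1}\,g}+M\,(1+\log\pi)\,.
\]
Finally, the prefactor $\mathsf C_d\,\nrm{u_d}{2^*}^{8/(d-2)}$ must be expanded separately; one checks that its limit is precisely the factor $M$ appearing on the right-hand side of~\eqref{S-HLS-d=2}.

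The principal obstacle is the prefactor $\nrm{u_d}{2^*}^{8/(d-2)}$, whose exponent blows up as $d\to2$ and which therefore demands a sharp expansion of $\nrm{u_d}{2^*}$ in $d-2$ rather than merely its pointwise limit; this is where the factor $M$ on the right of~\eqref{S-HLS-d=2} must be assembled. A secondary subtlety is that both deficits in~\eqref{S-HLS} vanish to order $(d-2)^2$, so the inequality degenerates to $0\le 0$ in the limit; one must divide by $(d-2)^2$ before letting $d\to 2$, and justify the exchange of limit and integration using the smoothness and compact support of $f$. Once these expansions are performed and the numerical constants matched, the two functionals in~\eqref{S-HLS-d=2} emerge with the correct coefficients and the theorem follows.
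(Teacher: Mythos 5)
Your plan reproduces, in spirit, the paper's own treatment of the \emph{radial} case (Lemma~\ref{lem:radial} and Corollary~\ref{cor:radial}, where $u=u_\star(1+\tfrac{d-2}{2d}f)$ is plugged into the inequality with $d>2$ treated as a real parameter and the limit $d\to2_+$ is taken), but it has a genuine gap for general $f\in\mathcal D(\R^2)$: you never explain how to make sense of the substitution $u_d(x)=u_*(x)\,(1+\tfrac1{2^*}f(x))$ when $f$ is a non-radial function on $\R^2$ and the inequality~\eqref{S-HLS} lives on $\R^d$. For radial data the dimension enters only through the measure $r^{d-1}\,dr$, so $d$ can be varied continuously; for non-radial data there is no ambient space ``$\R^d$ with $d$ slightly above $2$'' into which your test function fits, and Theorem~\ref{Thm:SquareSobolev} is only available for integer $d\ge3$. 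Nor can you first reduce to radial $f$ by rearrangement: as the paper points out in the introduction, symmetrization applies to each of the four classical inequalities separately but \emph{not} to a difference of two deficits such as~\eqref{S-HLS} or~\eqref{S-HLS-d=2}, so the radial case does not imply the general one. This is precisely why the paper routes the non-radial case through the weighted Caffarelli--Kohn--Nirenberg inequalities of Section~\ref{Sec:CKN}: the weights $|x|^{-2a}$ on $\R^2$ itself play the role of a continuously varying dimension without any symmetry assumption, the completion of the square gives Theorem~\ref{Thm:CKNsquare}, and the limit $(a,b)\to(0,0)$ along $b=\alpha a/(1+\alpha)$ (where symmetry of the CKN extremals is known, Proposition~\ref{Prop:Symmetry}) yields Theorem~\ref{Thm:Onofrisquare} and hence Theorem~\ref{Thm:Onofri}. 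Without this (or some substitute), your argument only proves the theorem for radial~$f$.

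Two smaller points. First, your order counting is off: with the normalization $\mathsf s_d\sim(d-2)^{-1}$ the two deficits in the radial inequality are of order $(d-2)^0$ after the prefactors are accounted for (the individual building blocks like $\mathsf h(d)$ vanish to order $d-2$, not $(d-2)^2$), so the limiting inequality is nondegenerate without dividing by $(d-2)^2$; the careful bookkeeping is in extracting the $O(d-2)$ coefficient of $\mathsf h(d)$, which in the paper requires a nontrivial manipulation of the kernel $k_d[v]$. Second, the prefactor $\nrm{u_d}{2^*}^{8/(d-2)}=\big(\nrm{u_d}{2^*}^{2^*}\big)^{4/d}\to M^2$ is actually harmless once one notes the exponent identity $\tfrac{8}{d-2}=\tfrac{4}{d}\cdot2^*$; the obstacle you flag there is not the real difficulty.
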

Using for instance \cite{Almgren-Lieb89} or \cite[Lemma~2]{MR1143664} (also see \cite[chapter 3--4]{lieb2001analysis}), it is known that optimality is achieved in~\eqref{Ineq:Sobolev}, \eqref{Ineq:HLS}, \eqref{Ineq:Onofri} or \eqref{Ineq:logHLS} when the problem is reduced to radially symmetric functions. However, no such result applies when considering a difference of the terms in two such inequalities, like in~\eqref{S-HLS} or~\eqref{S-HLS-d=2}. Optimality therefore requires a special treatment. In Section~\ref{Sec:Square}, we shall use the \emph{completion of the square method} to establish the inequalities (without optimality) under an assumption of radial symmetry in case of Theorem~\ref{Thm:Onofri}. For radial functions, Theorem~\ref{Thm:SquareSobolev} can indeed be written with $d>2$ considered as a real parameter and Theorem~\ref{Thm:Onofri} corresponds, in this setting, to the limit case as $d\to2_+$. To handle the general case (without radial symmetry assumption), a more general setting is required. In Section~\ref{Sec:CKN}, we extend the results established for Sobolev inequalities to weighted spaces and obtain an improved version of the Caffarelli-Kohn-Nirenberg inequalities (see Theorem~\ref{Thm:CKNsquare}). Playing with weights is equivalent to varying $d$ or taking limits with respect to~$d$, except that no symmetry assumption is required. This allows to complete the proof of Theorem~\ref{Thm:Onofri}.

Technical results regarding the computation of the constants, a weighted Poincar\'e inequality and the stereographic projection, the extension of the flow method of \cite{Dolbeault2011} to the case of the dimensions $d=3$ and $d=4$, and symmetry results for Caffarelli-Kohn-Nirenberg inequalities have been collected in various appendices.

At this point, we emphasize that Theorems~\ref{Thm:CKNsquare} and~\ref{Thm:Onofrisquare}, which are used as intermediate steps in the proof of Theorem~\ref{Thm:Onofri} are slightly more general than, respectively, Theorems~\ref{Thm:SquareSobolev} and~\ref{Thm:Onofri}, except for the issue of the optimal value of the proportionality constant, which has not been studied. It is likely that the method used for Sobolev's inequality can be adapted, but since weights break the translation invariance, some care should be given to this question, which is of independent interest and known to raise a number of difficulties of its own (see for instance \cite{Oslo}). The question of a lower estimate of the proportionality constant in \eqref{S-HLS-d=2} in connection with a larger family of Onofri type inequalities is currently being studied, see~\cite{Jankowiak2014}.

\medskip Let us conclude this introduction by a brief review of the literature. To establish the inequalities, our approach is based on a completion of the square method which accounts for duality issues. Linearization (spectral estimates) and estimates based on a nonlinear flow are used for optimality issues. Although some of these methods have been widely used in the literature, for instance in the context of Hardy inequalities (see \cite{MR2379440} and references therein), it seems that they have not been fully exploited yet in the case of the functional inequalities considered in this paper. The main tool in~\cite{Dolbeault2011} is a flow of fast diffusion type, which has been considered earlier in~\cite{delPino-Saez01}. In dimension $d=2$, we may refer to various papers (see for instance \cite{MR1679782,MR1371208,MR2610890}) in connection with Ricci's flow for properties of the solutions of the corresponding evolution equation.

Many papers have been devoted to the asymptotic behaviour near extinction of the solutions of nonlinear flows, in bounded domains (see for instance \cite{MR588035,MR1877973,MR1285092,BGV10}) or in the whole space (see \cite{king1993self,MR1348964,MR1475779} and references therein). In particular, the Cauchy-Schwarz inequality has been repeatedly used, for instance in \cite{MR588035,MR1285092}, and turns out to be a key tool in the main result of \cite{Dolbeault2011}, as well as the solution \emph{with separation of variables,} which is related to the Aubin-Talenti optimal function for~\eqref{Ineq:Sobolev}.

Getting improved versions of Sobolev's inequality is a question which has attracted lots of attention.
See \cite{MR790771} in the bounded domain case and~\cite{BN83} for an earlier related paper. However, in~\cite{MR790771}, H.~Brezis and E.~Lieb also raised the question of measuring the distance to the manifold of optimal functions in the case of the Euclidean space. A few years later, G.~Bianchi and H.~Egnell gave an answer in \cite{MR1124290} using the concentration-compactness method, with no explicit value of the constant. Since then, considerable efforts have been devoted to obtain quantitative improvements of Sobolev's inequality. On the whole Euclidean space, nice estimates based on rearrangements have been obtained in \cite{MR2538501} and we refer to \cite{MR2508840} for an interesting review of various related results. The method there is in some sense constructive, but it hard to figure what is the practical value of the constant. As in~\cite{Dolbeault2011} our approach involves much weaker notions of distances to optimal functions, but on the other hand offers clear-cut estimates. Moreover, it provides an interesting way of obtaining global estimates based on a linearization around Aubin-Talenti optimal functions.

\section{A completion of the square and consequences}\label{Sec:Square}

Before proving the main results of this paper, let us explain in which sense Sobolev's inequality and the Hardy-Littlewood-Sobolev inequality, or Onofri's inequality and the logarithmic Hardy-Littlewood-Sobolev inequality, for instance, are \emph{dual inequalities}.

To a convex functional $F$, we may associate the functional $F^*$ defined by Legendre's duality as
\[
F^*[v]:=\sup\(\ird{u\,v}-F[u]\)\,.
\]
For instance, to $F_1[u]=\frac{1}2\,\nrm up^2$
defined on $\L^p(\R^d)$, we henceforth associate $F_1^*[v]=\frac{1}2\,\nrm vq^2$ on $\L^q(\R^d)$ where $p$ and $q$ are H\"older conjugate exponents: $1/p +1/q=1$. The supremum can be taken for instance on all functions in $\L^p(\R^d)$, or, by density, on the smaller space of the functions $u\in\L^p(\R^d)$ such that $\nabla u\in\L^2(\R^d)$. Similarly, to $F_2[u]=\frac{1}2\,\mathsf{S}_d\,\nrm{\nabla u}2^2$, we associate $F_2^*[v]=\frac{1}2\,\mathsf{S}_d^{-1}\ird{v\,(-\Delta)^{-1}\,v}$ where $(-\Delta)^{-1}\,v=G_d*v$ with $G_d(x)=\frac{1}{d-2}\,|\mathbb S^{d-1}|^{-1}\,|x|^{2-d}$, when $d\ge3$, and $G_2(x)=-\,\frac1{2\pi}\,\log|x|$. As a straightforward consequence of Legendre's duality, if we have a functional inequality of the form $F_1[u]\le F_2[u]$, then we have the \emph{dual inequality} $F_1^*[v]\ge F_2^*[v]$. In this sense,~\eqref{Ineq:Sobolev} and~\eqref{Ineq:HLS} are dual of each other, as it has been noticed in \cite{MR717827}. Also notice that Inequality~\eqref{Ineq:HLS} is a consequence of Inequality~\eqref{Ineq:Sobolev}.

In this paper, we go one step further and establish that
\be{Ineq:Abstract}
F_1^*[u]-F_2^*[u]\le\mathsf{C}\(F_2[u]-F_1[u]\)
\ee
for some positive constant $\mathsf{C}$, at least under some normalization condition (or up to a multiplicative term which is required for simple homogeneity reasons). Such an inequality has been established in \cite[Theorem~1.2]{Dolbeault2011} when $d\ge5$. Here we extend it to any $d\ge3$ and get and improved value for the constant $\mathsf{C}$.

It turns out that the proof can be reduced to the completion of a square. Let us explain how the method applies in case of Theorem~\ref{Thm:SquareSobolev}, and how Theorem~\ref{Thm:Onofri} can be seen as a limit of Theorem~\ref{Thm:SquareSobolev} in case of radial functions.

\begin{proof}[Proof of Theorem~\ref{Thm:SquareSobolev}, part 1: the completion of a square]$\phantom{x}$\\ Integrations by parts show that
\[
\ird{|\nabla(-\Delta)^{-1}\,v|^2}=\ird{v\,(-\Delta)^{-1}\,v}
\]
and, if $v=u^q$ with $q=\frac{d+2}{d-2}$,
\[
\ird{\nabla u\cdot\nabla(-\Delta)^{-1}\,v}=\ird{u\,v}=\ird{u^{2^*}}\,.
\]
Hence the expansion of the square
\[
0\le\ird{\left|\mathsf{S}_d\,\nrm u{2^*}^\frac4{d-2}\,\nabla u-\nabla(-\Delta)^{-1}\,v\right|^2}
\]
shows that
\begin{multline*}
0\le\mathsf{S}_d\,\nrm u{2^*}^\frac{8}{d-2}\left[\mathsf{S}_d\,\nrm{\nabla u}2^2-\nrm u{2^*}^2\right]\\
-\Big[\mathsf{S}_d\,\nrm{u^q}{\frac{2\,d}{d+2}}^2-\ird{u^q\,(-\Delta)^{-1}\,u^q}\Big].
\end{multline*}
Equality is achieved if and only if
\[
\mathsf{S}_d\,\nrm u{2^*}^\frac4{d-2}\,u=(-\Delta)^{-1}\,v=(-\Delta)^{-1}\,u^q\,,
\]that is, if and only if $u$ solves
\[
-\,\Delta u=\frac{1}{\mathsf{S}_d}\,\nrm u{2^*}^{-\frac4{d-2}}\,u^q\,,
\]
which means that $u$ is an Aubin-Talenti function, optimal for \eqref{Ineq:Sobolev}. This completes the proof of Theorem~\ref{Thm:SquareSobolev}, up to the optimality of the proportionality constant, for which we know that
\be{Ineq:Cupper}
\mathsf{C}_d=\mathcal C\,\mathsf{S}_d\quad\mbox{with}\quad\mathcal C\le1\,.
\ee
Incidentally, this also proves that $v$ is optimal for \eqref{Ineq:HLS}.\end{proof}

As a first step towards the proof of Theorem~\ref{Thm:Onofri}, let us start with a result for radial functions. If $d$ is a positive integer, we can define
\[
\mathsf{s}_d:=\mathsf{S}_d\,|\S^{d-1}|^\frac2d
\]
and get
\be{Eqn:sd}
\mathsf{s}_d=\frac4{d\,(d-2)}\(\frac{\Gamma\(\frac{d+1}2\)}{\sqrt\pi\,\Gamma\(\frac{d}2\)}\)^\frac2d\,.
\ee
Using this last expression allows us to consider $d$ as a real parameter.
\begin{lem}\label{lem:radial} Assume that $d\in\R$ and $d>2$. Then
\begin{multline*}\label{S-HLS}\hspace*{-10pt}
0\le\mathsf{s}_d\(\int_0^\infty u^\frac{2\,d}{d-2}\;r^{d-1}\,dr\)^{1+\frac{2}d}-\int_0^\infty u^\frac{d+2}{d-2}\((-\Delta)^{-1}u^\frac{d+2}{d-2}\)\,r^{d-1}\,dr\\
\le\mathsf{c}_d\(\int_0^\infty u^\frac{2\,d}{d-2}\;r^{d-1}\,dr\)^\frac{4}d\left[\,\mathsf{s}_d\!\int_0^\infty|u'|^2\;r^{d-1}\,dr-\(\int_0^\infty\!u^\frac{2\,d}{d-2}\;r^{d-1}\,dr\)^\frac{d-2}d\right]
\end{multline*}
holds for any radial function $u\in\D$ with optimal constant $\mathsf{c}_d\le\mathsf{s}_d$.\end{lem}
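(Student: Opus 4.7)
The plan is to transcribe the completion-of-square argument of Theorem~\ref{Thm:SquareSobolev} into a one-dimensional setting in which $d$ enters only through the weight $r^{d-1}$ and the radial Laplacian $L_d u := -u''-\tfrac{d-1}{r}u'$, now viewed with $d>2$ a real parameter. For any sufficiently decaying nonnegative radial $v$, I would define
\[
\bigl((-\Delta)^{-1}v\bigr)(r):=\int_r^\infty s^{1-d}\int_0^s v(t)\,t^{d-1}\,dt\,ds,
\]
so that $w=(-\Delta)^{-1}v$ solves $L_d w=v$ with $w'(0)=0$ and $w(\infty)=0$. The two integrations by parts needed below,
\[
\int_0^\infty \bigl|((-\Delta)^{-1}v)'\bigr|^2 r^{d-1}\,dr = \int_0^\infty v\,(-\Delta)^{-1}v\,r^{d-1}\,dr
\]
and, when $v=u^{(d+2)/(d-2)}$,
\[
\int_0^\infty u'\bigl((-\Delta)^{-1}v\bigr)'\,r^{d-1}\,dr = \int_0^\infty u\,v\,r^{d-1}\,dr = B,\qquad B:=\int_0^\infty u^{2d/(d-2)}\,r^{d-1}\,dr,
\]
follow once the boundary contributions are checked to vanish.

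With these identities in hand, I would expand the nonnegative square
\[
0\le\int_0^\infty\Bigl(\mathsf{s}_d\,B^{2/d}\,u'(r) - \bigl((-\Delta)^{-1}v\bigr)'(r)\Bigr)^{\!2} r^{d-1}\,dr,
\]
substitute the two identities, and regroup. Using the algebraic identity $4/d+(d-2)/d=1+2/d$, the expansion rearranges exactly as
\[
0\le \mathsf{s}_d\,B^{4/d}\Bigl[\mathsf{s}_d\int_0^\infty|u'|^2 r^{d-1}\,dr - B^{(d-2)/d}\Bigr] - \Bigl[\mathsf{s}_d\,B^{1+2/d} - \int_0^\infty v(-\Delta)^{-1}v\,r^{d-1}\,dr\Bigr],
\]
which is the right-hand inequality of the lemma with $\mathsf{c}_d\le\mathsf{s}_d$. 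Equality is achieved when $\mathsf{s}_d B^{2/d}\,u'=((-\Delta)^{-1}v)'$, \ie when $u$ is an Aubin-Talenti profile $r\mapsto(1+r^2)^{-(d-2)/2}$, up to scaling.

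It remains to justify the left-hand inequality, namely the nonnegativity of $\mathsf{s}_d\,B^{1+2/d}-\int_0^\infty v(-\Delta)^{-1}v\,r^{d-1}\,dr$. This is the radial Hardy-Littlewood-Sobolev inequality with real parameter $d$, which follows via the Legendre-duality argument recalled at the beginning of Section~\ref{Sec:Square} from the sharp radial Sobolev-type inequality $\mathsf{s}_d\int_0^\infty|u'|^2 r^{d-1}\,dr\ge B^{(d-2)/d}$; for real $d>2$ this is a classical one-dimensional Bliss inequality whose sharp constant is given by formula~\eqref{Eqn:sd}. The main obstacle is not the algebra, which is a literal transposition of the proof of Theorem~\ref{Thm:SquareSobolev}, but the set-up of the functional framework for noninteger $d$: one must define the appropriate weighted Sobolev space of radial profiles, verify that $(-\Delta)^{-1}v$ is well defined with enough decay to kill the boundary terms in the two integrations by parts above, and confirm that the Aubin-Talenti profile remains an admissible extremizer for any real $d>2$.
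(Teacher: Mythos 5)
Your proposal is correct and follows essentially the same route as the paper: the sharp radial Sobolev inequality for real $d>2$ (which the paper derives via the Emden--Fowler change of variables reducing it to a one-dimensional Gagliardo--Nirenberg inequality, equivalent to the Bliss inequality you invoke), its Legendre dual giving the first (HLS) inequality, and the completion of the square with $a=\mathsf{s}_d\,B^{2/d}$ giving the second. The algebraic regrouping via $4/d+(d-2)/d=1+2/d$ and the identification of the equality case with the Aubin--Talenti profile match the paper's argument.
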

Here we use the notation $(-\Delta)^{-1}\,v=w$ to express the fact that $w$ is the solution to $w''+\frac{d-1}r\,w'+v=0$, that is,
\be{Def:InvLap}
(-\Delta)^{-1}\,v\,(r)=\int_r^\infty s^{1-d}\int_0^sv(t)\;t^{d-1}\,dt\;ds\quad\forall\,r>0\,.
\ee

\begin{proof} In the case of a radially symmetric function $u$, and with the standard abuse of notations that amounts to identify $u(x)$ with $u(r)$, $r=|x|$, Inequality~\eqref{Ineq:Sobolev} can be written as
\be{Ineq:SobolevRadial}
\mathsf{s}_d\int_0^\infty |u'|^2\,r^{d-1}\,dr\ge\(\int_0^\infty |u|^\frac{2\,d}{d-2}\,r^{d-1}\,dr\)^{1-\frac2d}\,.
\ee
However, if $u$ is considered as a function of one real variable $r$, then the inequality also holds for any \emph{real parameter} $d\in(2,\infty)$ and is equivalent to the one-dimensional Gagliardo-Nirenberg inequality
\[
\mathsf{s}_d\(\irt{|w'|^2}+\tfrac14\,(d-2)^2\irt{|w|^2}\)\ge\(\irt{|w|^\frac{2\,d}{d-2}}\)^{1-\frac2d}
\]
as can be shown using the Emden-Fowler transformation
\be{Eqn:Emden-Fowler}
u(r)=(2\,r)^{-\frac{d-2}2}\,w(t)\,,\quad t=-\,\log r\,.
\ee
The corresponding optimal function is, up to a multiplication by a constant, given by
\[
w_\star(t)=\(\cosh t\)^{-\frac{d-2}2}\quad\forall\,t\in\R\,,
\]
which solves the Euler-Lagrange equation
\[
-\,(p-2)^2\,w''+4\,w-\,2\,p\,|w|^{p-2}\,w=0\,.
\]
for any real number $d>2$ and the optimal function for~\eqref{Ineq:SobolevRadial} is
\[
u_\star(r)=(2\,r)^{-\frac{d-2}2}\,w_\star(-\log r)=\(1+r^2\)^{-\frac{d-2}2}
\]
up to translations, multiplication by a constant and scalings. This establishes~\eqref{Eqn:sd}. See \ref{Sec:UsefulFormulae} for details on the computation of $\mathsf{s}_d$. The reader is in particular invited to check that the expression of $\mathsf{s}_d$ is consistent with the one of $\mathsf{S}_d$ given in the introduction.

Next we apply Legendre's transform to \eqref{Ineq:SobolevRadial} and get a Hardy-Littlewood-Sobolev inequality that reads
\be{Ineq:HLSradial}
\int_0^\infty v\;(-\Delta)^{-1}\,v\;r^{d-1}\,dr\le\mathsf{s}_d\(\int_0^\infty v^\frac{2\,d}{d+2}\;r^{d-1}\,dr\)^{1+\frac{d}2}
\ee
for any $d>2$. Inequality~\eqref{Ineq:HLSradial} holds on the functional space which is obtained by completion of the space of smooth compactly supported radial functions with respect to the norm defined by the r.h.s.~in~\eqref{Ineq:HLSradial}. Inequality~\eqref{Ineq:HLSradial} is the first inequality of Lemma~\ref{lem:radial}.

Finally, we apply the completion of the square method. By expanding
\[
0\le\int_0^\infty\big|\,a\,u'-\big((-\Delta)^{-1}v\big)'\,\big|^2\;r^{d-1}\,dr
\]
with $a=\mathsf{s}_d\,\(\int_0^\infty u^\frac{2\,d}{d-2}\;r^{d-1}\,dr\)^\frac{2}d$ and $v=u^\frac{d-2}{d+2}$, we establish the second inequality of Lemma~\ref{lem:radial} (with optimal constant $\mathsf{c}_d\le\mathsf{s}_d$).
\end{proof}

Now let us turn our attention to the case $d=2$ and to Theorem~\ref{Thm:Onofri}. Using the fact that $d$ in Lemma~\ref{lem:radial} is a real parameter, we can simply consider the limit of the inequalities as $d\to2_+$.
\begin{cor}\label{cor:radial} For any function $f\in\L^1(\R^+;\,r\,dr)$ such that $f'\in\L^2(\R^+;\,r\,dr)$ and $M=\int_0^\infty e^f\,(1+r^2)^{-2}\,2\,r\,dr$, we have the inequality
\begin{multline}\label{S-logHLS}\hspace*{-10pt}
0\le\int_0^\infty e^f\,\log\(\frac{e^f}{M\,(1+r^2)^2}\)\;\frac{2\,r\,dr}{(1+r^2)^2}\\
\quad-\,\frac2M\int_0^\infty\frac{e^f}{(1+r^2)^2}\;(-\Delta)^{-1}\(\frac{e^f}{(1+r^2)^2}\)\;2\,r\,dr+\,M\\
\le M\,\left[\frac18\int_0^\infty |f'|^2\,r\,dr+\int_0^\infty f\;\frac{2\,r\,dr}{(1+r^2)^2}-\log\(\int_0^\infty e^f\;\frac{2\,r\,dr}{(1+r^2)^2}\)\right]\,.
\end{multline}
\end{cor}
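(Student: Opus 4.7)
The plan is to pass to the limit $d\to 2_+$ in the inequality of Lemma~\ref{lem:radial} after the substitution
\[
u(r) \;:=\; (1+r^2)^{-(d-2)/2}\,\exp\!\Bigl(\tfrac{d-2}{2\,d}\,f(r)\Bigr),
\]
chosen so that $u^{2d/(d-2)}(r) = (1+r^2)^{-d}\,e^{f(r)}$ for every $d>2$. With this choice, $I_d := \int_0^\infty u^{2d/(d-2)}\,r^{d-1}\,dr \to M/2$ as $d\to 2_+$. Since \eqref{Eqn:sd} gives $\mathsf{s}_d = (d-2)^{-1} + O(1)$ and both brackets in Lemma~\ref{lem:radial} vanish identically at $f=0$ (Sobolev and HLS equality at the Aubin--Talenti profile $u_\star(r):=(1+r^2)^{-(d-2)/2}$), the inequality is of indeterminate $0\cdot\infty$ form at $d=2$, and the whole proof reduces to a careful first-order expansion, in powers of $\varepsilon := d-2$, of each of its four quantities.

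For the Sobolev--Onofri bracket $[B] := \mathsf{s}_d\!\int_0^\infty|u'|^2\,r^{d-1}\,dr - I_d^{(d-2)/d}$, the identity $u'/u = \tfrac{\varepsilon}{2d}[f' - 2dr/(1+r^2)]$ together with the integration-by-parts formula $\int_0^\infty f'\,r^2/(1+r^2)\,dr = -\int_0^\infty f\,\tfrac{2r\,dr}{(1+r^2)^2}$ and the expansion $I_d^{(d-2)/d} = I_{d,0}^{(d-2)/d} + \tfrac{\varepsilon}{2}\log M + O(\varepsilon^2)$ (with $I_{d,0}:=I_d|_{f=0}$) yield
\[
[B] \;=\; \tfrac{\varepsilon}{2}\,\Big[\tfrac{1}{8}\!\int_0^\infty |f'|^2\,r\,dr + \int_0^\infty f\,\tfrac{2r\,dr}{(1+r^2)^2} - \log M\Big] + O(\varepsilon^2).
\]
Since $\mathsf{s}_d\,I_d^{4/d} \sim (M/2)^2\,\varepsilon^{-1}$, the right-hand side $\mathsf{c}_d\,I_d^{4/d}\,[B] \le \mathsf{s}_d\,I_d^{4/d}\,[B]$ of Lemma~\ref{lem:radial} converges, after dividing by a common positive factor, to $M$ times the Onofri functional appearing on the right-hand side of~\eqref{S-logHLS}.

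The main obstacle will be the analogous expansion of the left-hand bracket $[A] := \mathsf{s}_d\,I_d^{1+2/d} - \int_0^\infty v\,(-\Delta)^{-1}v\,r^{d-1}\,dr$, with $v:=u^{(d+2)/(d-2)}$, where the $d$-dimensional Riesz kernel must be matched with its two-dimensional logarithmic counterpart. Using~\eqref{Def:InvLap} and a double integration by parts, the HLS bilinear form becomes $\int_0^\infty W(r)^2\,r^{1-d}\,dr$ with $W(r):=\int_0^r v(t)\,t^{d-1}\,dt$; splitting this integral at a fixed large radius isolates the pole $V^2/\varepsilon$ with $V:=\lim_{r\to\infty}W(r)$, which exactly matches the leading pole $(M/2)^2/\varepsilon$ of $\mathsf{s}_d\,I_d^{1+2/d}$. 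After this pole cancellation, the expansion $r^{2-d}/\varepsilon = 1/\varepsilon - \log r + O(\varepsilon)$ turns the remaining finite part of the bilinear form into the logarithmic convolution $-(2\pi)^{-1}\iint g(x)g(y)\log|x-y|\,dx\,dy$ with $g=e^f\,\mu$, which is precisely the $(-\Delta)^{-1}$ contribution to~\eqref{S-logHLS}, while the finite part of $\mathsf{s}_d\,I_d^{1+2/d}$ together with the logarithmic corrections produces the entropy term $\int_0^\infty e^f\,\log(e^f/(M(1+r^2)^2))\,\tfrac{2r\,dr}{(1+r^2)^2} + M$. Once the common multiplicative constants have been bookkept, Lemma~\ref{lem:radial}'s inequality becomes, at leading order in $\varepsilon$, precisely~\eqref{S-logHLS}.
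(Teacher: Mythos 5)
Your proposal follows essentially the same route as the paper: pass to the limit $d\to2_+$ in Lemma~\ref{lem:radial} with $u$ a perturbation of the Aubin--Talenti profile parametrized by $f$, expand both brackets to first order in $\varepsilon=d-2$, and match the $1/\varepsilon$ pole of the Hardy--Littlewood--Sobolev bilinear form against that of $\mathsf{s}_d\,I_d^{1+2/d}$, the surviving first-order terms giving the entropy, logarithmic-kernel and Onofri functionals. The only differences are cosmetic: you use the exact exponential substitution where the paper uses $u_\star\,(1+\tfrac{d-2}{2\,d}\,f)$, and you isolate the pole by splitting the radial integral at a large radius where the paper performs one more integration by parts to reach a manifestly convergent expression; both yield the same identification.
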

Here again $(-\Delta)^{-1}$ is defined by~\eqref{Def:InvLap}, but it coincides with the inverse of $-\Delta$ acting on radial functions.

\begin{proof}
We may pass to the limit in~\eqref{Ineq:SobolevRadial} written in terms
of
\[
u(r)=u_\star(r)\(1+\tfrac{d-2}{2\,d}\,f\)
\]
to get the radial version of Onofri's inequality for $f$. By expanding the expression of $|u'|^2$ we get
\[
u'^2=u_\star'^2+\frac{d-2}d\,u_\star'\(u_\star\,f\)'+\(\frac{d-2}{2\,d}\)^2\(u_\star'\,f+u_\star\,f'\)^2\,.
\]

Using the fact that $\lim_{d\to2_+}(d-2)\,\mathsf{s}_d=1$,
\[
\mathsf{s}_d=\frac1{d-2}+\frac{1}2-\frac{1}2\,\log 2+o(1)\quad\mbox{as}\quad d\to2_+\,,
\]
and
\[
\lim_{d\to2_+}\frac1{d-2}\int_0^\infty|u_\star'|^2\;r^{d-1}\,dr=1\,,
\]
\[
\frac1{d-2}\int_0^\infty|u_\star'|^2\;r^{d-1}\,dr-1\sim-\frac12\,(d-2)\,,
\]
\[
\lim_{d\to2_+}\frac1{d-2}\int_0^\infty u_\star'\(u_\star\,f\)'\,r^{d-1}\,dr=\int_0^\infty f\;\frac{2\,r\,dr}{(1+r^2)^2}\,,
\]
\[
\lim_{d\to2_+}\frac1{4\, d^2}\int_0^\infty |f'|^2\,u_\star^2\,r^{d-1}\,dr=\frac1{16}\int_0^\infty |f'|^2\,r\,dr\,,
\]
and finally
\[
\lim_{d\to2_+}\int_0^\infty |u_\star\,(1+\tfrac{d-2}{2\,d}\,f)|^\frac{2\,d}{d-2}\,r^{d-1}\,dr=\int_0^\infty e^f\;\frac{r\,dr}{(1+r^2)^2}\,,
\]
so that, as $d\to2_+$,
\[
\(\int_0^\infty |u_\star\,(1+\tfrac{d-2}{2\,d}\,f)|^\frac{2\,d}{d-2}\,r^{d-1}\,dr\)^\frac{d-2}d-1
\sim\frac{d-2}2\,\log\(\int_0^\infty e^f\;\frac{r\,dr}{(1+r^2)^2}\).
\]
By keeping only the highest order terms, which are of the order of $(d-2)$, and passing to the limit as $d\to2_+$ in~\eqref{Ineq:SobolevRadial}, we obtain that
\[
\frac18\int_0^\infty |f'|^2\,r\,dr+\int_0^\infty f\;\frac{2\,r\,dr}{(1+r^2)^2}\ge\log\(\int_0^\infty e^f\;\frac{2\,r\,dr}{(1+r^2)^2}\)\,,
\]
which is Onofri's inequality written for radial functions.

Similarly, we can pass to the limit as $d\to2_+$ in~\eqref{Ineq:HLSradial}. Let $v$ be a compactly supported smooth radial function, considered as a function of $r\in[0,\infty)$ and let us compute the limit as $d\to2_+$ of
\[
\mathsf{h}(d):=\(\int_0^\infty v^\frac{2\,d}{d+2}\;r^{d-1}\,dr\)^{1+\frac{2}d}-\frac1{\mathsf{s}_d}\int_0^\infty v\,k_d[v]\,r^{d-1}\,dr
\]
where $k_d[v]:=(-\Delta)^{-1}\,v$ is given by \eqref{Def:InvLap} for any $d\ge2$. If $d>2$, since
\begin{multline*}
(2-d)\int_0^\infty v(r)\,k_d[v](r)\,r^{d-1}\,dr\\
=(2-d)\int_0^\infty v(r)\,r^{d-1}\int_r^\infty s^{1-d}\int_0^sv(t)\;t^{d-1}\,dt\;ds\;dr\\
=(2-d)\int_0^\infty r^{1-d}\(\int_0^r v(t)\;t^{d-1}\,dt\)^2dr\\
=-\,2\int_0^\infty r\,v(r)\int_0^r v(t)\;t^{d-1}\,dt\;dr
\end{multline*}
we see that $\lim_{d\to2_+}\mathsf{h}(d)=0$ since
\[
2\int_0^\infty r\,v(r)\int_0^r v(t)\;t\,dt\;dr=\(\int_0^\infty r\,v(r)\;dr\)^2\,.
\]
Let us compute the $O(d-2)$ term. With the above expression, it is now easy to check that
\begin{eqnarray*}
&&\lim_{d\to2_+}\frac{\mathsf{h}(d)}{d-2}\\
&&=\frac{1}2\int_0^\infty v\,r\,dr\int_0^\infty v\,\log\(\frac{v}{\int_0^\infty v\,r\,dr}\)r\,dr-\,\frac{\log2-1}2\(\int_0^\infty r\,v(r)\;dr\)^2\\
&&\hspace*{12pt}+\,2\int_0^\infty v\,r\,dr\int_0^\infty v(r)\;r\log r\;dr-2\int_0^\infty r\,v(r)\int_0^r v(t)\;t\log t\;dt\;dr\\
&&=\frac{1}2\int_0^\infty v\,r\,dr\int_0^\infty v\,\log\(\frac{v}{\int_0^\infty v\,r\,dr}\)r\,dr-\,\frac{\log2-1}2\(\int_0^\infty r\,v(r)\;dr\)^2\\
&&\hspace*{12pt}+\,2\int_0^\infty v\,r\,dr\int_r^\infty v(t)\;t\log t\;dt
\end{eqnarray*}
since $\frac1{(d-2)\,\mathsf{s}_d}\sim1+\frac{d-2}2\,(\log2-1)$. A computation corresponding to $d=2$ similar to the one done above for $d>2$ shows that, when $d=2$,
\begin{multline*}
\int_0^\infty v\,k_2[v]\,r\,dr
=\int_0^\infty v(r)\,r\int_r^\infty\frac1s\int_0^sv(t)\;t\,dt\;ds\;dr\\
=\int_0^\infty\frac1r\(\int_0^r v(t)\;t\,dt\)^2dr\\
=\,-\,2\int_0^\infty r\,\log r\,v(r)\int_0^r v(t)\,t\,dt\;dr\,,
\end{multline*}
thus proving that
\begin{multline*}
\lim_{d\to2_+}\frac{\mathsf{h}(d)}{d-2}=\frac{1}2\int_0^\infty v\,r\,dr\int_0^\infty v\,\log\(\frac{v}{\int_0^\infty v\,r\,dr}\)r\,dr-\int_0^\infty v\,k_2[v]\,r^{d-1}\,dr\\
-\,\frac12\,(\log2-1)\(\int_0^\infty r\,v(r)\;dr\)^2\,.
\end{multline*}

Now let us consider as above the limit
\[
u^\frac{d+2}{d-2}=(1+r^2)^{-\frac{d+2}2}\,(1+\tfrac{d-2}{2\,d}\,f)^\frac{d+2}{d-2}\to(1+r^2)^{-2}\,e^f=:g
\]
as $d\to2$. This concludes the proof of Corollary~\ref{cor:radial} by passing to the limit in the inequalities of Lemma~\ref{lem:radial} and taking $v=g$.\end{proof}

\begin{proof}[Proof of Theorem~\ref{Thm:Onofri}: a passage to the limit in the radial case]
If we consider $g$ as a function on $\R^2\ni x$ with $r=|x|$, this means that
\begin{multline*}
\lim_{d\to2_+}\frac{\mathsf{h}(d)}{d-2}=\frac{1}2\irdeux g\irdeux{g\,\log\(\frac{g}{\irdeux g}\)}-2\,\pi\irdeux{g\;(-\Delta)^{-1}\,g}\\
+\frac12\,(1+\log\pi)\(\irdeux g\)^2
\end{multline*}
which precisely corresponds to the terms involved in \eqref{Ineq:logHLS}, up to a factor $\frac{1}2\,M=\frac12\irdeux g$. The proof in the non-radial case will be provided at the end of Section~\ref{Sec:CKN}.\end{proof}

\section{Linearization}\label{Sec:Linearization}

In the previous section, we have proved that the optimal constant $\mathsf{C}_d$ in~\eqref{S-HLS} is such that $\mathsf{C}_d \le \mathsf{S}_d$. Let us prove that $\mathsf{C}_d\ge\frac{d}{d+4}\,\mathsf{S}_d$ using a special sequence of test functions. Let $\mathcal F$ and $\mathcal G$ be the positive integral quantities associated with, respectively, the Sobolev and Hardy-Littlewood-Sobolev inequalities:
\[
    \mathcal{F}[u] := \mathsf{S}_d\,\nrm{\nabla u}2^2-\nrm u{2^*}^2\,,
\]
\[
    \mathcal{G}[v] := \mathsf{S}_d\,\nrm{v}{\frac{2\,d}{d+2}}^2-\ird{v\,(-\Delta)^{-1}\,v}\,.
\]
Since that, for the Aubin-Talenti extremal function $u_\star$, we have~$\mathcal F[u_\star] = \mathcal G[u_\star^q] = 0$, so that $u_\star$ gives a case of equality for \eqref{S-HLS}, a natural question to ask is whether the infimum of $\mathcal F[u]/\mathcal G[u^q]$, under an appropriate normalization of~$\nrm u{2^*}$, is achieved as a perturbation of the $u_\star$.

Recall that $u_\star$ is the Aubin-Talenti extremal function
\[
u_\star(x):=(1+|x|^2)^{-\frac{d-2}2}\quad\forall\,x\in\R^d\,.
\]
With a slight abuse of notations, we use the same notation as in Section~\ref{Sec:Square}. We may notice that $u_\star$ solves
\[
-\Delta u_\star=d\,(d-2)\,u_\star^\frac{d+2}{d-2}
\]
which allows to compute the optimal Sobolev constant as
\be{Eqn:OptSob}
\mathsf{S}_d=\frac1{d\,(d-2)}\,\(\ird{u_\star^{2^*}}\)^{-\frac2d}
\ee
using \eqref{Eqn:Emden-Fowler}. See \ref{Sec:UsefulFormulae} for details. This shows that
\[
\frac1{\mathsf{S}_d}\,\mathcal F[u]=\nrm{\nabla u}2^2-\,d\,(d-2)\(\ird{u^{2^*}}\)^{1-\frac{2}d}\(\ird{u_\star^{2^*}}\)^\frac{2}d.
\]

The goal of this section is to perform a linearization. By expanding $\mathcal F[u_\varepsilon]$ with $u_\varepsilon=u_\star+\varepsilon\,f$, for some $f$ such that $\ird{\frac{f\,u_\star}{(1+|x|^2)^2}}=0$ at order two in terms of $\varepsilon$, we get that
\[
\frac1{\mathsf{S}_d}\,\mathcal F[u_\varepsilon]=\varepsilon^2\,\mathsf{F}[f]+o(\varepsilon^2)
\]
where
\[
\mathsf{F}[f]:=\ird{|\nabla f|^2}-\,d\,(d+2)\ird{\frac{|f|^2}{(1+|x|^2)^2}}\,.
\]
According to Lemma~\ref{Lem:Poincare} (see \ref{Sec:Stereographic}), we know that
\[
\mathsf{F}[f]\ge4\,(d+2)\ird{\frac{|f|^2}{(1+|x|^2)^2}}
\]
for any $f\in\mathcal D^{1,2}(\R^d)$ such that
\be{Eqn:Orthogonality}
\ird{\frac{f\,f_i}{(1+|x|^2)^2}}=0\quad\forall\,i=0\,,\;1\,,\;2\,,\;\ldots d+1\,,
\ee
where
\[
f_0:=u_\star\,,\quad f_i(x)=\frac{x_i}{1+|x|^2}\,u_\star(x)\quad\mbox{and}\quad f_{d+1}(x):=\frac{1-|x|^2}{1+|x|^2}\,u_\star(x)\,.
\]
Notice for later use that
\[
-\,\Delta f_0=d\,(d-2)\,\frac{f_0}{(1+|x|^2)^2}
\]
and
\[
-\,\Delta f_i=d\,(d+2)\,\frac{f_i}{(1+|x|^2)^2}\quad\forall\,i=1\,,\;2\,,\;\ldots d+1\,.
\]
Also notice that
\[
\ird{\frac{f_i\,f_j}{(1+|x|^2)^2}}=0
\]
for any $i$, $j=0$, $1$, \ldots $d+1$, $j\neq i$.

Similarly, we can consider the functional $\mathcal G$ as given above, associated with the Hardy-Little\-wood-Sobolev inequality,
and whose minimum $\mathcal G[v_\star]=0$ is achieved by $v_\star:=u_\star^q$, $q=\frac{d+2}{d-2}$. Consistently with the above computations, let  $v_\varepsilon:=\(u_\star+\varepsilon\,f\)^q=v_\star\,\big(1+\varepsilon\,\tfrac f{u_\star}\big)^q$ where $f$ is such that $\ird{\frac{f\,f_0}{(1+|x|^2)^2}}=0$. By expanding $\mathcal G[v_\varepsilon]$ at order two in terms of~$\varepsilon$, we get that
\[
\mathcal G[v_\varepsilon]=\varepsilon^2\,\(\frac{d+2}{d-2}\)^2\,\mathsf{G}[f]+o(\varepsilon^2)
\]
where
\begin{multline*}
\mathsf{G}[f]:=\frac1{d\,(d+2)}\,\ird{\frac{|f|^2}{(1+|x|^2)^2}}\\
-\ird{\frac{f}{(1+|x|^2)^2}\;(-\Delta)^{-1}\!\(\frac{f}{(1+|x|^2)^2}\)}\,.
\end{multline*}
\begin{lem}\label{lem:linearization3} $\mathrm{Ker}(\mathsf{F})=\mathrm{Ker}(\mathsf{G})$.\end{lem}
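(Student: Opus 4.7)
The approach will be to identify each of the two kernels through its Euler-Lagrange (weak) equation and to observe that the two equations coincide. Throughout I write $\mu(x) := (1+|x|^2)^{-2}$ for brevity. My target is to show that
\[
\mathrm{Ker}(\mathsf F) \,=\, \mathrm{Ker}(\mathsf G) \,=\, \bigl\{f \;:\; -\Delta f = d\,(d+2)\,\mu\,f \text{ in } \mathcal D'(\R^d)\bigr\}.
\]

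For $\mathsf F$ the computation is immediate: polarising and integrating by parts give
\[
\mathsf F[f,g] \,=\, \int_{\R^d}\bigl(-\Delta f - d\,(d+2)\,\mu\,f\bigr)\,g\,dx,
\]
so $\mathsf F[f,\cdot]\equiv 0$ is exactly the stated weighted eigenvalue equation. For $\mathsf G$, the self-adjointness of $(-\Delta)^{-1}$ allows one to rewrite the polarised form as
\[
\mathsf G[f,g] \,=\, \int_{\R^d}\mu\,g\,\left\{\frac{f}{d\,(d+2)} \;-\; (-\Delta)^{-1}[\mu\,f]\right\}\,dx.
\]
Since $\mu$ is strictly positive pointwise, the vanishing of $\mathsf G[f,\cdot]$ on a dense class of test functions forces $(-\Delta)^{-1}[\mu\,f] = f/[d\,(d+2)]$, and applying $-\Delta$ to both sides recovers exactly the same equation as for $\mathsf F$. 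This yields the equality of the two kernels.

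The part that will need some care is the functional-analytic setting, since $\mathsf F$ and $\mathsf G$ are naturally defined on different spaces --- $\mathcal D^{1,2}(\R^d)$ and a weighted $L^2$-type class adapted to the Hardy-Littlewood-Sobolev inequality respectively --- so that one must justify the formal manipulations (in particular, the interpretation of the equation as a genuine distributional identity, and the use of self-adjointness of $(-\Delta)^{-1}$ against $\mu\,f$ and $\mu\,g$). This is not really a deep obstacle, because the solution space of the equation is explicit: the identities $-\Delta f_i = d\,(d+2)\,\mu\,f_i$ for $i = 1,\dots,d+1$ recorded just before the lemma, together with the stereographic identification of this weighted eigenvalue problem with the spectral problem for degree-one spherical harmonics on $\S^d$, show that in fact $\mathrm{Ker}(\mathsf F) = \mathrm{Ker}(\mathsf G) = \mathrm{span}\{f_1,\dots,f_{d+1}\}$, and these functions manifestly lie in both ambient spaces.
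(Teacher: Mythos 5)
Your proof is correct and matches what the paper intends: the paper itself offers no argument beyond the assertion that both kernels are generated by $f_1,\dots,f_{d+1}$ (``details are left to the reader''), and your reduction of both radicals to the single weighted eigenvalue equation $-\Delta f = d\,(d+2)\,(1+|x|^2)^{-2}f$, whose solution space is the stereographic image of the degree-one spherical harmonics, is exactly the computation being left out, consistent with the spectral decomposition the paper deploys later in the proof of Corollary~\ref{cor:linearization4}. The only point worth making explicit is that ``$\mathrm{Ker}$'' must be read as the radical of the associated bilinear form (or equivalently the null set on the subspace orthogonal to $f_0$, where both forms are nonnegative), since on all of $\mathcal D^{1,2}(\R^d)$ the quadratic forms are indefinite in the $f_0$ direction; your Euler--Lagrange characterization silently adopts this reading and yields the right answer.
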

It is straightforward to check that the kernel is generated by $f_i$ with $i=1$, $2$, \ldots $d$, $d+1$. Details are left to the reader.
Next, by Legendre duality we find that
\[
\frac12\ird{\frac{|g|^2}{(1+|x|^2)^2}}=\sup_f\(\ird{\frac{f\,g}{(1+|x|^2)^2}}-\,\frac12\ird{\frac{|f|^2}{(1+|x|^2)^2}}\)\,,
\]
\begin{multline*}
\frac12\ird{\frac{g}{(1+|x|^2)^2}\,(-\Delta)^{-1}\(\frac{g}{(1+|x|^2)^2}\)}\\
=\sup_f\(\ird{\frac{f\,g}{(1+|x|^2)^2}}-\,\frac12\ird{|\nabla f|^2}\)\,.
\end{multline*}
Here the supremum is taken for all $f$ satisfying the orthogonality conditions~\eqref{Eqn:Orthogonality}. It is then straightforward to see that duality holds if $g$ is restricted to functions satisfying~\eqref{Eqn:Orthogonality} as well. Consider indeed an optimal function $f$ subject to~\eqref{Eqn:Orthogonality}. There are Lagrange multipliers $\mu_i\in\R$ such that
\[
g-f-\sum_{i=0}^{d+1}\mu_i\,f_i=0
\]
and after multiplying by $f\,(1+|x|^2)^{-2}$, an integration shows that
\[
\ird{\frac{f\,g}{(1+|x|^2)^2}}=\ird{\frac{|f|^2}{(1+|x|^2)^2}}
\]
using the fact that $f$ satisfies~\eqref{Eqn:Orthogonality}. On the other hand, if $g$ satisfies~\eqref{Eqn:Orthogonality}, after multiplying by $g\,(1+|x|^2)^{-2}$, an integration gives
\[
\ird{\frac{|g|^2}{(1+|x|^2)^2}}=\ird{\frac{f\,g}{(1+|x|^2)^2}}\,,
\]
which establishes the first identity of duality. As for the second identity, the optimal function satisfies the Euler-Lagrange equation
\[
\frac{g}{(1+|x|^2)^2}+\,\Delta\,f=\sum_{i=0}^{d+1}\mu_i\,\frac{f_i}{(1+|x|^2)^2}
\]
for some Lagrange multipliers that we again denote by $\mu_i$. By multiplying by $f$ and $(-\Delta)^{-1}\big(g\,(1+|x|^2)^{-2}\big)$, we find that
\begin{eqnarray*}
&&
\ird{\frac{f\,g}{(1+|x|^2)^2}}=\ird{|\nabla f|^2}\\
&&\ird{\frac{g}{(1+|x|^2)^2}\,(-\Delta)^{-1}\(\frac{g}{(1+|x|^2)^2}\)}=\ird{\frac{f\,g}{(1+|x|^2)^2}}
\end{eqnarray*}
where we have used the fact that 
\begin{multline*}
\ird{\frac{f_i}{(1+|x|^2)^2}\,(-\Delta)^{-1}\(\frac{g}{(1+|x|^2)^2}\)}\\
=\ird{\frac{g}{(1+|x|^2)^2}\,(-\Delta)^{-1}\(\frac{f_i}{(1+|x|^2)^2}\)}=0
\end{multline*}
because $(-\Delta)^{-1}\big(f_i\,(1+|x|^2)^{-2}\big)$ is proportional to $f_i$. As a straightforward consequence, the dual form of Lemma~\ref{Lem:Poincare} then reads as follows.
\begin{cor}\label{cor:linearization4} For any $g$ satisfying the orthogonality conditions~\eqref{Eqn:Orthogonality}, we have
\[
\ird{\frac{g}{(1+|x|^2)^2}\,(-\Delta)^{-1}\!\(\frac{g}{(1+|x|^2)^2}\)\!}\le\frac1{(d+2)\,(d+4)}\!\ird{\frac{g^2}{(1+|x|^2)^2}}\,.
\]
Moreover, if $f$ obeys to~\eqref{Eqn:Orthogonality}, then we have
\[
\frac4{d\,(d+2)\,(d+4)}\ird{\frac{f^2}{(1+|x|^2)^2}}\le\mathsf{G}[f]\le\frac1{d\,(d+2)^2\,(d+4)}\,\mathsf{F}[f]
\]
and equalities are achieved in $\L^2(\R^d,(1+|x|^2)^{-2}\,dx)$.\end{cor}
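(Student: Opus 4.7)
The plan is to leverage the Legendre-duality framework established in the paragraphs immediately preceding the statement: the two identities there express $\frac12\ird{\frac{g^2}{(1+|x|^2)^2}}$ and $\frac12\ird{\frac{g}{(1+|x|^2)^2}\,(-\Delta)^{-1}\bigl[\frac{g}{(1+|x|^2)^2}\bigr]}$ as suprema over $f$ satisfying~\eqref{Eqn:Orthogonality}, so any coercivity estimate between the primal forms $\ird{|\nabla f|^2}$ and $\ird{\frac{f^2}{(1+|x|^2)^2}}$ automatically transfers, by Legendre transformation, to the reverse estimate for their dual forms, with reciprocal constant. Lemma~\ref{Lem:Poincare} gives $\mathsf{F}[f]\ge 4\,(d+2)\ird{\frac{f^2}{(1+|x|^2)^2}}$ under~\eqref{Eqn:Orthogonality}, which can be rewritten as
\[
\ird{|\nabla f|^2}\ge(d+2)\,(d+4)\ird{\frac{f^2}{(1+|x|^2)^2}}\,.
\]

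First I would derive the HLS-type upper bound stated in the corollary. Inserting the above Poincar\'e inequality into the duality formula
\[
\tfrac12\ird{\tfrac{g}{(1+|x|^2)^2}\,(-\Delta)^{-1}\!\bigl[\tfrac{g}{(1+|x|^2)^2}\bigr]}=\sup_f\!\left(\ird{\tfrac{f\,g}{(1+|x|^2)^2}}-\tfrac12\ird{|\nabla f|^2}\right)
\]
bounds the right-hand side above by $\sup_f\bigl(\ird{\frac{f\,g}{(1+|x|^2)^2}}-\frac{(d+2)(d+4)}{2}\ird{\frac{f^2}{(1+|x|^2)^2}}\bigr)$. Completing the square in the weighted space $\L^2(\R^d,(1+|x|^2)^{-2}dx)$, the (unique) optimizer is $f=g/((d+2)(d+4))$, which still satisfies~\eqref{Eqn:Orthogonality} because $g$ does, and the maximum value is $\frac{1}{2\,(d+2)(d+4)}\ird{\frac{g^2}{(1+|x|^2)^2}}$, yielding the first inequality after multiplying by $2$.

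The lower bound $\mathsf{G}[f]\ge\frac{4}{d(d+2)(d+4)}\ird{\frac{f^2}{(1+|x|^2)^2}}$ follows at once by applying the inequality just proved to $g=f$ and substituting into the definition of $\mathsf{G}$, since $\frac{1}{d(d+2)}-\frac{1}{(d+2)(d+4)}=\frac{4}{d(d+2)(d+4)}$. For the remaining estimate $\mathsf{G}[f]\le\frac{1}{d(d+2)^2(d+4)}\,\mathsf{F}[f]$, I would decompose $f$ along the eigenbasis $\{\psi_k\}$ of the weighted problem $-\Delta\psi_k=\lambda_k\,\psi_k/(1+|x|^2)^2$, orthonormalized via $\ird{\frac{\psi_j\psi_k}{(1+|x|^2)^2}}=\delta_{jk}$; through the stereographic projection (cf.~the Poincar\'e-type analysis of \ref{Sec:Stereographic}), this is the Laplace-Beltrami eigenvalue problem on $\S^d$, whose first eigenvalues are $\lambda_0=d(d-2)$, $\lambda_1=\dots=\lambda_{d+1}=d(d+2)$, with $\lambda_k\ge(d+2)(d+4)$ for all subsequent modes. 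Writing $f=\sum_k a_k\psi_k$ one obtains $\mathsf{F}[f]=\sum_k(\lambda_k-d(d+2))\,a_k^2$ and $\mathsf{G}[f]=\sum_k\frac{\lambda_k-d(d+2)}{d(d+2)\,\lambda_k}\,a_k^2$, so~\eqref{Eqn:Orthogonality} restricts both sums to modes with $\lambda_k\ge(d+2)(d+4)$, and the pointwise ratio $\frac{1}{d(d+2)\lambda_k}$ is maximized exactly at $\lambda_k=(d+2)(d+4)$, which delivers the announced constant and shows that equality holds for $f$ in the eigenspace $\lambda=(d+2)(d+4)$.

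The main obstacle is essentially verifying that the Legendre-duality representation of the inverse-Laplacian quadratic form remains valid after restricting the supremum to the subspace~\eqref{Eqn:Orthogonality}, which is the content of the Lagrange-multiplier computation carried out in the paragraph preceding the corollary; once this is granted, the sharpness of the constants in $\L^2(\R^d,(1+|x|^2)^{-2}dx)$ is automatic since the optimizer in each case lies in the eigenspace at $\lambda=(d+2)(d+4)$, which is non-trivial and obeys~\eqref{Eqn:Orthogonality}.
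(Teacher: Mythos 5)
Your proposal is correct and follows essentially the same route as the paper: the first inequality by Legendre duality from the weighted Poincar\'e inequality of Lemma~\ref{Lem:Poincare}, the lower bound on $\mathsf{G}$ from the identity $\frac1{d\,(d+2)}-\frac1{(d+2)\,(d+4)}=\frac4{d\,(d+2)\,(d+4)}$, and the upper bound by decomposing $f$ on the stereographically projected spherical harmonics and comparing the two quadratic forms mode by mode, with equality on the $k=2$ eigenspace. Your write-up is merely more explicit than the paper's about the completion of the square in the dual formulation; the substance is identical.
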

\begin{proof} The first inequality follows from the above considerations on duality and the second one from the definition of $\mathsf{G}$, using
\[
\frac4{d\,(d+2)\,(d+4)}=\frac1{d\,(d+2)}-\frac1{(d+2)\,(d+4)}\,.
\]
To establish the last inequality, we can decompose $f$ on $(f_k)_k$, the stereographic projection of the spherical harmonics associated to eigenvalues $\lambda_k=k\,(k+d-1)$ with $k\ge2$, so as to meet condition~\eqref{Eqn:Orthogonality}. See~\ref{Sec:Stereographic} for more details. The corresponding eigenvalues for the Laplacian operator on the Euclidean space
are $\mu_k=4\,\lambda_k+\,d\,(d-2)$, so that $-\Delta f_k=\mu_k\,f_k\,(1+|x|^2)^{-2}$, with $\|f_k\|_{L^2\left(\R^d,\,(1+|x|^2)^{-2}\,dx\right)}=1$. By writing $f=\sum_{k\ge2}a_k\,f_k$ we have
\[
\mathsf{F}[f]=\sum_{k\ge2}c_k\,,\quad\mbox{with}\quad c_k:=a_k^2\,(\mu_k-\mu_1)\,,
\]
\[
\mathsf{G}[f]=\sum_{k\ge2}d_k\,,\quad\mbox{with}\quad d_k:=a_k^2\(\frac1{\mu_1}-\frac1{\mu_k}\)\,,
\]
with $c_k = \mu_1\, \mu_k\, d_k \le \mu_1\,\mu_2\, d_k$ since $(\mu_k)_k$ is increasing in $k$. This yields
\[
\frac{\mathsf{F}[f]}{\mathsf{G}[f]}\le\mu_1\,\mu_2=d\,(d+2)^2\,(d+4)\,,
\]
with equality for $f=f_2$.
\end{proof}

As a consequence of Corollary~\ref{cor:linearization4} and \eqref{Eqn:OptSob}, we have found that
\be{Final:Linearization}
\frac1{\mathcal C}:=\frac{\mathsf{S}_d}{\mathsf{C}_d}=\inf_{\mathcal G[u^q]\neq0}\frac{\nrm u{2^*}^\frac{8}{d-2}\,\mathsf{S_d}\,\mathcal F[u]}{\mathcal G[u^q]}\le\frac1{d^2\,(d+2)^2}\,\inf_f\frac{\mathsf{F}[f]}{\mathsf{G}[f]}=\frac{d+4}d\,,
\ee
where the last infimum is taken on the set of all non-trivial functions in $\L^2(\R^d,(1+|x|^2)^{-2}\,dx)$ satisfying~\eqref{Eqn:Orthogonality}. This establishes the lower bound in~\eqref{S-HLS}. 
\begin{remark} One may hope to get a better estimate by considering the case $f\in\mathrm{Ker}(\mathsf{F}) = \mathrm{Ker}(\mathsf{G})$ and expanding $\mathcal F$ and $\mathcal G$ to the fourth order in $\varepsilon$ but, interestingly, this yields exactly the same lower bound on $\mathsf{C}_d$ as the linearization shown above.\end{remark}

\section{Improved inequalities and nonlinear flows}\label{Sec:Improvements}

In Section~\ref{Sec:Linearization}, the basic strategy was based on the completion of a square. The initial approach for the improvement of Sobolev inequalities in \cite{Dolbeault2011} was based on a fast diffusion flow. Let us give some details and explain how even better results can be obtained using a combination of the two approaches.

Let us start with a summary of the method of \cite{Dolbeault2011}. It will be convenient to define the functionals
\[
\mathsf{J}_d[v]:=\ird{v^\frac{2\,d}{d+2}}\quad\mbox{and}\quad\mathsf{H}_d[v]:=\ird{v\,(-\Delta)^{-1}v}-\mathsf{S}_d\,\nrm v{\frac{2\,d}{d+2}}^2\,.
\]
Consider a positive solution $v$ of the \emph{fast diffusion} equation
\be{Eqn:FD}
\frac{\partial v}{\partial t}=\Delta v^m\quad t>0\,,\quad x\in\R^d\,,\quad m=\frac{d-2}{d+2}
\ee
and define the functions
\[
\mathsf{J}(t):=\mathsf{J}_d[v(t,\cdot)]\quad\mbox{and}\quad\mathsf{H}(t):=\mathsf{H}_d[v(t,\cdot)]\,.
\]
We shall denote by $\mathsf{J}_0$ and $\mathsf{H}_0$ the corresponding initial values. Elementary computations show that
\be{Eqn:DynSobolev}
\mathsf{J}'=-\,(m+1)\,\nrm{\nabla v^m}2^2\le -\,\frac{m+1}{\mathsf{S}_d}\,\mathsf{J}^{1-\frac{2}d}=-\,\frac{2\,d}{d+2}\frac1{\mathsf{S}_d}\,\mathsf{J}^{1-\frac{2}d}\,,
\ee
where the inequality is a consequence of Sobolev's inequality. Hence $v$ has a finite extinction time $T>0$ and since
\[
\mathsf{J}(t)^\frac{2}d\le\mathsf{J}_0^\frac{2}d-\frac4{d+2}\,\frac{t}{\mathsf{S}_d}\,,
\]
we find that
\[
T\le\frac{d+2}4\,\mathsf{S}_d\,\mathsf{J}_0^\frac{2}d\,.
\]
We notice that $\mathsf{H}$ is nonpositive because of the Hardy-Littlewood-Sobolev inequality and by applying the flow of \eqref{Eqn:FD}, we get that
\[
\frac12\,\mathsf{J}^{-\frac{2}d}\,\mathsf{H}'=\mathsf{S}_d\,\nrm{\nabla u}2^2-\nrm u{2^*}^2\quad\mbox{with}\quad u=v^\frac{d-2}{d+2}\,.
\]
The right hand side is nonnegative because of Sobolev's inequality. One more derivation with respect to $t$ gives that
\be{Eqn:K}
\mathsf{H}''=\frac{\mathsf{J}'}{\mathsf{J}}\,\mathsf{H}'-4\,m\,\mathsf{S}_d\,\mathsf{J}^\frac{2}d\,\mathsf{K}
\ee
where $\mathsf{K}:=\ird{v^{m-1}\,|(-\Delta) v^m-\Lambda\,v|^2}$ and $\Lambda:=-\frac{d+2}{2\,d}\,\frac{\mathsf{J}'}{\mathsf{J}}$. This identity makes sense in dimension $d\ge5$, because, close to the extinction time, $v$ behaves like the Aubin-Talenti functions. The reader is invited to check that all terms are finite when expanding the square in $\mathsf{K}$ and can refer to \cite{Dolbeault2011} for more details. It turns out that the following estimate is also true if $d=3$ or $d=4$.
\begin{lem}\label{Lem:Concavity} Assume that $d\ge3$. With above notations, we have
\[
\frac{\mathsf{H}''}{\mathsf{H}'}\le\frac{\mathsf{J}'}{\mathsf{J}}\,.
\]
\end{lem}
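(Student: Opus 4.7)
The plan is to derive the inequality directly from identity~\eqref{Eqn:K}, which rearranges as
\[
\mathsf{H}''-\frac{\mathsf{J}'}{\mathsf{J}}\,\mathsf{H}'=-\,4\,m\,\mathsf{S}_d\,\mathsf{J}^{2/d}\,\mathsf{K}\,.
\]
Every factor on the right-hand side has a definite sign: for $d\ge 3$ we have $m=(d-2)/(d+2)\ge 0$, the constant $\mathsf{S}_d$ is positive, $\mathsf{J}>0$ as long as $v\not\equiv 0$, and $\mathsf{K}\ge 0$ because it is an integral of the non-negative function $v^{m-1}\,|(-\Delta)v^m-\Lambda\,v|^2$ (with $v>0$). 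Consequently $\mathsf{H}''\le(\mathsf{J}'/\mathsf{J})\,\mathsf{H}'$. To pass from this to the stated quotient inequality, the plan is to use that $\mathsf{H}'=2\,\mathsf{J}^{2/d}\,(\mathsf{S}_d\,\|\nabla u\|_2^2-\|u\|_{2^*}^2)>0$ strictly so long as $u=v^m$ is not an Aubin-Talenti profile, so one may divide by $\mathsf{H}'$ without reversing the inequality; the equality case corresponds to extinction and can be handled by a limiting argument or excluded as a degenerate case.

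The first step I would carry out is therefore the sign analysis above, reducing the lemma to the two assertions $\mathsf{K}\ge 0$ and $\mathsf{H}'>0$. The second step, and the only nontrivial one, is to justify that identity~\eqref{Eqn:K} together with the finiteness of $\mathsf{K}$ is meaningful when $d\in\{3,4\}$: the paragraph preceding the lemma explicitly flags that a literal reading requires $d\ge 5$ because, near the extinction time $T$, the solution behaves like an Aubin-Talenti function $v_\star(x)\sim(1+|x|^2)^{-(d+2)/2}$, so that individual terms such as $\int v^{m-1}\,|(-\Delta)v^m|^2\,dx$ or $\Lambda^2\int v^{m+1}\,dx$ may fail to be integrable at infinity when $d$ is small, even though the square in the definition of $\mathsf{K}$ produces the necessary cancellations.

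The main obstacle is thus purely one of integrability, and my proposed route is an approximation argument. First, I would regularize the datum (e.g.\ truncate the initial profile outside a ball and smooth it, or equivalently work with $v_n$ obtained by shifting the concentration parameter of the Aubin-Talenti profile before extinction), producing solutions $v_n$ of~\eqref{Eqn:FD} for which every integration by parts in the derivation of~\eqref{Eqn:K} is classical and every term is finite. For such $v_n$ the chain $\mathsf{H}_n''-(\mathsf{J}_n'/\mathsf{J}_n)\,\mathsf{H}_n'=-\,4m\,\mathsf{S}_d\,\mathsf{J}_n^{2/d}\,\mathsf{K}_n\le 0$ is immediate. Then I would pass to the limit $n\to\infty$ using continuity of $\mathsf{J}$ and $\mathsf{H}$ under the flow and Fatou's lemma applied to $\mathsf{K}_n$ (which suffices, since only the inequality $-\mathsf{K}_n\le 0$, not an equality, is needed); the resulting upper semicontinuity of $\mathsf{H}''-(\mathsf{J}'/\mathsf{J})\mathsf{H}'$ preserves the sign, giving the lemma in the full range $d\ge 3$. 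This is precisely the kind of technical extension that the authors indicate is deferred to the appendix on dimensions $d=3$ and $d=4$.
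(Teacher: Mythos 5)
Your first step is fine but it is only the easy half of the argument: granting identity~\eqref{Eqn:K} with all terms finite, the sign of $\mathsf{K}$ immediately gives $\mathsf{H}''\le(\mathsf{J}'/\mathsf{J})\,\mathsf{H}'$, and this is exactly the situation for $d\ge5$. The gap is in your second step, which is where the whole content of the lemma lies. Your regularization does not remove the obstruction: for the very fast diffusion exponent $m=(d-2)/(d+2)$ the spatial tail of the solution near the extinction time is dictated by the equation, not by the initial datum --- by Lemma~\ref{Thm:delPinoSaez} every solution approaches the Aubin--Talenti profile $v_\star\sim|x|^{-(d+2)}$ as $t\to T_-$, whatever compactly supported datum one starts from. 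So the truncated solutions $v_n$ exhibit the same divergent individual terms for $d=3,4$ as the original one, and the claim that ``every term is finite'' for them is unsubstantiated. Even if finiteness held for the approximants, you would still have to pass to the limit in a pointwise inequality between second time-derivatives; continuity of $\mathsf J$ and $\mathsf H$ plus Fatou does not give $\mathsf H_n''\to\mathsf H''$, and the distributional (twice-integrated in $t$) formulation this would require is not set up. Most importantly, you never exhibit the cancellation which the paper explicitly identifies as the point in low dimensions.

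What the paper actually does (see~\ref{Sec:FlowSphere}) is to conjugate the flow with the inverse stereographic projection and the self-similar time change $\tau=-\log(T-t)$, so that the rescaled solution $w$ lives on the compact sphere $\S^d$ and converges uniformly to $1$. All integrals are then taken over $\S^d$ and are manifestly finite in every dimension $d\ge3$: the divergent pieces of $\mathsf K$ on $\R^d$ are absorbed into the conformal factors. In these variables the inequality $\mathsf{H}''\le(\mathsf{J}'/\mathsf{J})\,\mathsf{H}'$ is equivalent to the monotonicity of $\mathsf{Q}=\mathsf{J}^{\frac2d-1}\isd{\big|\nabla w^{(d-2)/(d+2)}\big|^2}$, which follows from a single Cauchy--Schwarz inequality between finite integrals on the sphere. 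To salvage your route you would have to prove the cancellation directly on $\R^d$, e.g.\ by grouping the terms of $\mathsf K$ before integrating; the spherical reformulation is precisely a systematic way of performing that grouping.
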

The main idea is that even if each of the above integrals is infinite, there are cancellations in low dimensions. To clarify this computation, it is much easier to get rid of the time-dependence corresponding to the \emph{solution with separation of variables} and use the inverse stereographic projection to recast the problem on the sphere. The sketch of the proof of this lemma will be given in~\ref{Sec:FlowSphere}.

A straightforward consequence is the fact that
\[
\frac{\mathsf{H}''}{\mathsf{H}'}\le-\,\kappa\quad\mbox{with}\quad\kappa:=\frac{2\,d}{d+2}\,\frac{\mathsf{J}_0^{-\frac{2}d}}{\mathsf{S}_d}
\]
where the last inequality is a consequence of \eqref{Eqn:DynSobolev}. Two integrations with respect to $t$ show that
\[
-\,\mathsf{H}_0\le\frac1\kappa\,\mathsf{H}_0'\,(1-e^{-\kappa\,T})\le\frac12\,\mathcal C\,\mathsf{S}_d\,\mathsf{J}_0^\frac{2}d\,\mathsf{H}_0'\quad\mbox{with}\quad\mathcal C=\frac{d+2}d\,(1-e^{-d/2})\,,
\]
which is the main result of \cite{Dolbeault2011} (when $d\ge5$), namely
\[
-\,\mathsf{H}_0\le\mathcal C\,\mathsf{S}_d\,\mathsf{J}_0^\frac{4}d\,\left[\mathsf{S}_d\,\nrm{\nabla u_0}2^2-\nrm{u_0}{2^*}^2\right]\quad\mbox{with}\quad u_0=v_0^\frac{d-2}{d+2}\,.
\]
Since this inequality holds for any initial datum $u_0=u$, we have indeed shown that
\begin{multline*}
-\,\mathsf{H}_d[v]\le\mathcal C\,\mathsf{S}_d\,\mathsf{J}_d[v]^\frac{4}d\,\left[\mathsf{S}_d\,\nrm{\nabla u}2^2-\nrm u{2^*}^2\right]\\
\forall\,u\in\D\,,\;v=u^\frac{d+2}{d-2}\,.
\end{multline*}
It is straightforward to check that our result of Theorem~\ref{Thm:SquareSobolev} is an improvement, not only because the restriction $d\ge5$ is removed, but also because the inequality holds with $\frac{d}{d+4}\le\mathcal C<1<\frac{d+2}d\,(1-e^{-d/2})$. In other words, the result of Theorem~\ref{Thm:SquareSobolev} is equivalent to
\be{Ineq:ResThm1}
-\,\mathsf{H}_0\le\frac12\,\mathcal C\,\mathsf{S}_d\,\mathsf{J}_0^\frac{2}d\,\mathsf{H}_0'\quad\mbox{with}\quad\mathcal C=\frac{d}{d+4}\,.
\ee
Up to now, we have not established yet the fact that $\mathcal C<1$. This is what we are now going to do.

\medskip Now let us reinject in the flow method described above our improved inequality of Theorem~\ref{Thm:SquareSobolev}, which can also be written as
\be{Eqn:HJ}
\mathcal C \,\mathsf{S}_d\,\mathsf{J}^\frac4d\,\left[\frac{d+2}{2\,d}\,\mathsf{S}_d\,\mathsf{J}'+\mathsf{J}^{1-\frac2d}\right]-\mathsf{H}\le0
\ee
if $v$ is still a positive solution of \eqref{Eqn:FD}. From Lemma~\ref{Lem:Concavity}, we deduce that
\[
\mathsf{H}'\le\kappa_0\,\mathsf{J}\quad\mbox{with}\quad\kappa_0:=\frac{\mathsf{H}'_0}{\mathsf{J}_0}\,.
\]
Since $t\mapsto\mathsf{J}(t)$ is monotone decreasing, there exists a function $\mathsf{Y}$ such that
\[
\mathsf{H}(t)=-\,\mathsf{Y}(\mathsf{J}(t))\quad\forall\,t\in[0,T)\,.
\]
Differentiating with respect to $t$, we find that
\[
-\,\mathsf{Y}'(\mathsf{J})\,\mathsf{J}'=\mathsf{H}'\le\kappa_0\,\mathsf{J}
\]
and, by inserting this expression in~\eqref{Eqn:HJ}, we arrive at
\[
\mathcal C \left(-\,\frac{d+2}{2\,d}\,\kappa_0\,\mathsf{S}_d^2\,\frac{\mathsf{J}^{1+\frac4d}}{\mathsf{Y}'}+\mathsf{S}_d\,\mathsf{J}^{1+\frac2d}\right)+\mathsf{Y}\le0\,.
\]
Summarizing, we end up by considering the differential inequality
\be{Ineq:EDOY}
\mathsf{Y}'\(\mathcal C\,\mathsf{S}_d\,s^{1+\frac2d}+\mathsf{Y}\)\le\frac{d+2}{2\,d}\,\mathcal C\,\kappa_0\,\mathsf{S}_d^2\,s^{1+\frac4d}\,,\quad\mathsf{Y}(0)=0\,,\quad\mathsf{Y}(\mathsf{J}_0)=-\,\mathsf{H}_0
\ee
on the interval $[0,\mathsf{J}_0]\ni s$. It is then possible to obtain estimates as follows. On the one hand we know that
\[
\mathsf{Y}'\le\frac{d+2}{2\,d}\,\kappa_0\,\mathsf{S}_d\,s^\frac2d
\]
and, hence,
\[
\mathsf{Y}(s)\le\frac12\,\kappa_0\,\mathsf{S}_d\,s^{1+\frac2d}\quad\forall\,s\in[0,\mathsf{J}_0]\,.
\]
On the other hand, after integrating by parts on the interval $[0,\mathsf{J}_0]$, we get
\[
\frac12\,\mathsf{H}_0^2-\mathcal C\,\mathsf{S}_d\,\mathsf{J}_0^{1+\frac2d}\,\mathsf{H}_0\le\frac14\,\mathcal C\,\kappa_0\,\mathsf{S}_d^2\,\mathsf{J}_0^{2+\frac4d}+\frac{d+2}d\,\mathcal C\,\mathsf{S}_d\int_0^{\mathsf{J}_0}s^\frac2d\,\mathsf{Y}(s)\;ds\,.
\]
Using the above estimate, we find that
\[
\frac{d+2}d\,\mathsf{S}_d\int_0^{\mathsf{J}_0}s^\frac2d\,\mathsf{Y}(s)\;ds\le\frac14\,\mathsf{J}_0^{2+\frac4d}\,,
\]
and finally
\[
    \frac12\,\mathsf{H}_0^2-\mathcal C\,\mathsf{S}_d\,\mathsf{J}_0^{1+\frac2d}\,\mathsf{H}_0\le\frac12\,{\mathcal C}\kappa_0\,\mathsf{S}_d^2\,\mathsf{J}_0^{2+\frac4d}\,.
\]
This is a strict improvement of \eqref{Ineq:ResThm1} when $\mathcal C=1$ since \eqref{Ineq:ResThm1} is then equivalent to
\[
-\,\mathsf{S}_d\,\mathsf{J}_0^{1+\frac2d}\,\mathsf{H}_0\le\frac12\,\mathcal C\kappa_0\,\mathsf{S}_d^2\,\mathsf{J}_0^{2+\frac4d}\,.
\]
However, it is a strict improvement of \eqref{Ineq:ResThm1} if $\mathcal C<1$ only when $|\mathsf{H}_0|=-\,\mathsf{H}_0$ is large enough (we will come back to this point in Remarks~Ê\ref{Rem10} and~\ref{Rem11}). Altogether, we have shown an improved inequality that can be stated as follows.
\begin{thm}\label{Thm:Improved} Assume that $d\ge3$. Then we have
\begin{multline*}
0\le\mathsf{H}_d[v]+\mathsf{S}_d\,\mathsf{J}_d[v]^{1+\frac{2}d}\,\varphi\(\mathsf{J}_d[v]^{\frac2d-1}\,\left[\mathsf{S}_d\,\nrm{\nabla u}2^2-\nrm u{2^*}^2\right]\)\\
\forall\,u\in\D\,,\;v=u^\frac{d+2}{d-2}
\end{multline*}
where $\varphi(x):=\sqrt{\mathcal C^2+2\,\mathcal C\,x}-\mathcal C$ for any $x\ge0$.
\end{thm}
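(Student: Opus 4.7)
\medskip

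\textbf{Plan.} The proof will combine the ingredients accumulated earlier in this section: the fast-diffusion flow \eqref{Eqn:FD}, the concavity estimate of Lemma~\ref{Lem:Concavity}, and the strengthened Sobolev inequality \eqref{Eqn:HJ} coming from Theorem~\ref{Thm:SquareSobolev}. My strategy is to reduce the theorem to the scalar differential inequality \eqref{Ineq:EDOY} for a single function $\mathsf{Y}$ obtained by parametrising $\mathsf{H}$ along the flow by $\mathsf{J}$, integrate it to produce a quadratic inequality in the initial value $\mathsf{H}_0$, and then solve that quadratic by completion of the square; the function $\varphi$ will appear automatically as its positive branch.

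\textbf{Reduction to an ODI.} Given $u\in\D$, I would run \eqref{Eqn:FD} with initial datum $v_0:=u^{(d+2)/(d-2)}$. Since $t\mapsto\mathsf{J}(t)$ is strictly decreasing from $\mathsf{J}_0$ to $0$ on $[0,T)$, the relation $\mathsf{H}(t)=-\mathsf{Y}(\mathsf{J}(t))$ unambiguously defines a nonnegative function $\mathsf{Y}:[0,\mathsf{J}_0]\to\R_+$ with $\mathsf{Y}(0)=0$ and $\mathsf{Y}(\mathsf{J}_0)=-\mathsf{H}_0$. Integrating Lemma~\ref{Lem:Concavity} once yields $\mathsf{H}'(t)\le\kappa_0\,\mathsf{J}(t)$ with $\kappa_0:=\mathsf{H}'_0/\mathsf{J}_0>0$, equivalently $-\mathsf{Y}'(\mathsf{J})\,\mathsf{J}'\le\kappa_0\,\mathsf{J}$. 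Substituting the resulting lower bound on $\mathsf{J}'$ in \eqref{Eqn:HJ} -- which is legitimate because the coefficient of $\mathsf{J}'$ there has a positive sign -- produces the closed scalar inequality \eqref{Ineq:EDOY} for $\mathsf{Y}$ on $[0,\mathsf{J}_0]$.

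\textbf{Bootstrap and integration.} I would exploit \eqref{Ineq:EDOY} in two passes. First, dropping the nonnegative term $\mathsf{Y}$ on the left-hand side yields the crude bound $\mathsf{Y}'(s)\le\frac{d+2}{2d}\,\kappa_0\,\mathsf{S}_d\,s^{2/d}$, which upon integration with $\mathsf{Y}(0)=0$ gives $\mathsf{Y}(s)\le\frac12\,\kappa_0\,\mathsf{S}_d\,s^{1+2/d}$ for all $s\in[0,\mathsf{J}_0]$. Second, integrating $\mathsf{Y}'\bigl(\mathsf{Y}+\mathcal C\,\mathsf{S}_d\,s^{1+2/d}\bigr)$ over $[0,\mathsf{J}_0]$ and performing one integration by parts on the second factor, the leftover integral $\int_0^{\mathsf{J}_0}s^{2/d}\,\mathsf{Y}(s)\,ds$ is controlled by the crude bound. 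This produces exactly the quadratic inequality
\[
\tfrac12\,\mathsf{H}_0^2-\mathcal C\,\mathsf{S}_d\,\mathsf{J}_0^{1+\frac2d}\,\mathsf{H}_0\le\tfrac12\,\mathcal C\,\kappa_0\,\mathsf{S}_d^2\,\mathsf{J}_0^{2+\frac4d}
\]
already derived in the text just above the theorem statement.

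\textbf{Solving the quadratic.} Completing the square in the nonpositive unknown $\mathsf{H}_0$ gives $\bigl(-\mathsf{H}_0+\mathcal C\,\mathsf{S}_d\,\mathsf{J}_0^{1+2/d}\bigr)^2\le\mathsf{S}_d^2\,\mathsf{J}_0^{2+4/d}\bigl(\mathcal C^2+\mathcal C\,\kappa_0\bigr)$, and taking the positive square root yields $-\mathsf{H}_0\le\mathsf{S}_d\,\mathsf{J}_0^{1+2/d}\bigl(\sqrt{\mathcal C^2+\mathcal C\,\kappa_0}-\mathcal C\bigr)$. Inserting the identity $\tfrac12\,\mathsf{J}_0^{-2/d}\,\mathsf{H}'_0=\mathsf{S}_d\,\nrm{\nabla u}2^2-\nrm u{2^*}^2$ recalled in the text into $\kappa_0=\mathsf{H}'_0/\mathsf{J}_0$ converts $\mathcal C\,\kappa_0$ into $2\,\mathcal C\,x$ with $x:=\mathsf{J}_0^{2/d-1}\bigl[\mathsf{S}_d\,\nrm{\nabla u}2^2-\nrm u{2^*}^2\bigr]$, so that $\sqrt{\mathcal C^2+\mathcal C\,\kappa_0}-\mathcal C=\varphi(x)$, and rearranging delivers the statement. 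There is essentially no obstacle in this final argument; all the analytical substance resides in the two inputs (Lemma~\ref{Lem:Concavity}, whose delicate low-dimensional cancellations are handled on the sphere in Appendix~\ref{Sec:FlowSphere}, and the sharp constant $\mathcal C=d/(d+4)$ from Theorem~\ref{Thm:SquareSobolev}). The only points of care are the positive sign of the coefficient of $\mathsf{J}'$ in \eqref{Eqn:HJ} that legitimises the substitution, the boundary value $\mathsf{Y}(0)=0$ needed for a clean integration by parts, and the nonnegativity of $-\mathsf{H}_0$ that allows one to select the positive root when inverting the quadratic.
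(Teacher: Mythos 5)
Your proposal is correct and follows essentially the same route as the paper: the paper derives the differential inequality \eqref{Ineq:EDOY} from the flow, Lemma~\ref{Lem:Concavity} and \eqref{Eqn:HJ} in the paragraphs preceding the statement, performs exactly your two-pass integration (crude bound on $\mathsf{Y}$, then integration by parts) to reach the quadratic inequality $\tfrac12\,\mathsf{H}_0^2-\mathcal C\,\mathsf{S}_d\,\mathsf{J}_0^{1+2/d}\,\mathsf{H}_0\le\tfrac12\,\mathcal C\,\kappa_0\,\mathsf{S}_d^2\,\mathsf{J}_0^{2+4/d}$, and its proof of the theorem is just your final completion-of-the-square step with the identification $\kappa_0=2x$. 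Your sign discussion justifying the substitution of the bound on $\mathsf{J}'$ into \eqref{Eqn:HJ} matches what the paper does implicitly.
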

\begin{proof} We have shown that $y^2+\,2\,\mathcal C\,y-\,\mathcal C\,\kappa_0\le0$ with $y=-\,\mathsf{H}_0/(\mathsf{S}_d\,\mathsf{J}_0^{1+\frac{2}d})\ge0$. This proves that $y\le\sqrt{\mathcal C^2+\mathcal C\kappa_0}-\mathcal C$, which proves that
\[
-\,\mathsf{H}_0\le\mathsf{S}_d\,\mathsf{J}_0^{1+\frac{2}d}\(\sqrt{\mathcal C^2+\mathcal C\,\kappa_0}-\mathcal C\)
\]
after recalling that
\[
\frac12\,\kappa_0=\frac{\mathsf{H}'_0}{\mathsf{J}_0}=\mathsf{J}_d[v_0]^{\frac2d-1}\,\left[\mathsf{S}_d\,\nrm{\nabla u_0}2^2-\nrm{u_0}{2^*}^2\right]\,.
\]\end{proof}

\begin{remark}\label{Rem10} We may observe that $x\mapsto x-\varphi(x)$ is a convex nonnegative function which is equal to $0$ if and only if $x=0$. Moreover, we have
\[
\varphi(x)\le x\quad\forall\,x\ge0
\]
with equality if and only if $x=0$. However, one can notice that 
\[
\varphi(x)\le\mathcal C\,x\quad\Longleftrightarrow\quad x\ge2\,\frac{1-\mathcal C}{\mathcal C}\,.
\]
\end{remark} 
\begin{remark}\label{Rem11} A more careful analysis of \eqref{Ineq:EDOY} shows that
\[
\mathsf{Y}(s)\le\tfrac12\(\sqrt{1+\tfrac{2\,\kappa_0}{\mathcal C}}-1\)\,\mathcal C\,\mathsf{S}_d\,s^{1+\frac2d}\,,
\]
which shows that the inequality of Theorem~\ref{Thm:Improved} holds with the improved function 
\[
\varphi(x):=\sqrt{\mathcal C^2+\mathcal C\,x+\tfrac 12\,\mathcal C^2\(\sqrt{1+\tfrac {4\,x}{\mathcal C}}-1\)}-\mathcal C
\]
but again the reader is invited to check that $\varphi(x)\le x$ for any $x\ge0$ and $\lim_{x\to0_+}\varphi(x)/x=1$.
\end{remark}

\begin{cor}\label{Cor:NonOpt} With the above notations, we have $\mathcal C<1$.\end{cor}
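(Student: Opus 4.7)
The plan is to argue by contradiction: suppose that $\mathcal C=1$. Since $\mathcal C$ is the smallest constant for which~\eqref{S-HLS} holds, there exists a sequence $(u_n)\subset\D$ with $v_n:=u_n^{(d+2)/(d-2)}$ and
\[
y_n:=\nrm{u_n}{2^*}^{-2}\bigl(\mathsf{S}_d\,\nrm{\nabla u_n}2^2-\nrm{u_n}{2^*}^2\bigr)>0
\]
along which the quotient
\[
R_n:=\frac{-\,\mathsf{H}_d[v_n]}{\mathsf{S}_d\,\mathsf{J}_d[v_n]^{1+2/d}\,y_n}
\]
converges to $\mathcal C=1$. The strategy is to combine Theorem~\ref{Thm:Improved} with the linearization of Section~\ref{Sec:Linearization} to reach a contradiction.

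First I would apply Theorem~\ref{Thm:Improved} with the assumed value $\mathcal C=1$; it delivers $R_n\le\varphi(y_n)/y_n$ where $\varphi(x)=\sqrt{1+2x}-1$. Rationalization yields
\[
\frac{\varphi(x)}{x}=\frac{2}{1+\sqrt{1+2x}}\,,
\]
a function strictly decreasing on $(0,\infty)$, tending to $1$ as $x\to 0_+$ and to $0$ as $x\to+\infty$. Hence $R_n\to 1$ forces $y_n\to 0_+$.

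Next, the invariance of $R_n$ under the translations and dilations preserving the Aubin-Talenti manifold permits a renormalization along which, after extraction, $u_n\to u_\star$ strongly in $\D$; this is the classical profile decomposition for minimizing sequences of Sobolev's inequality, available precisely because $y_n\to 0$. Writing $u_n=u_\star+\varepsilon_n\,f_n$ with $\varepsilon_n\to 0$ and $f_n$ normalized to satisfy~\eqref{Eqn:Orthogonality} (so as to remove the tangential directions to the Aubin-Talenti manifold at $u_\star$), the second-order expansions of $\mathcal F$ and $\mathcal G$ recorded in Section~\ref{Sec:Linearization}, combined with the value of $\nrm{u_\star}{2^*}$ extracted from~\eqref{Eqn:OptSob}, give
\[
\lim_{n\to\infty}R_n=d^2(d+2)^2\,\frac{\mathsf{G}[f_\infty]}{\mathsf{F}[f_\infty]}\le\frac{d}{d+4}<1\,,
\]
where the last inequality is precisely the spectral bound of Corollary~\ref{cor:linearization4}. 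This contradicts $R_n\to 1$ and therefore proves $\mathcal C<1$.

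The main obstacle lies in the third step, namely converting the information $y_n\to 0$ into strong $\D$-convergence (after renormalization) of the maximizing sequence to $u_\star$; this is a standard but nontrivial concentration-compactness input. Once this reduction is secured, identifying $\lim R_n$ with the ratio $\mathsf G[f_\infty]/\mathsf F[f_\infty]$ controlled by Corollary~\ref{cor:linearization4} amounts to the same second-order expansion as the one underlying~\eqref{Final:Linearization}, with the roles of upper and lower bound merely reversed.
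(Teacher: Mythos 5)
Your argument is correct and follows essentially the same route as the paper: assume $\mathcal C=1$, use Theorem~\ref{Thm:Improved} together with the strict inequality $\varphi(x)<x$ for $x>0$ to force the Sobolev deficit of a maximizing sequence to vanish, then pass to the Aubin-Talenti profile by compactness up to translations and dilations and invoke the spectral bound of Corollary~\ref{cor:linearization4} to reach the contradiction $\mathcal C\le\frac{d}{d+4}<1$. The only cosmetic differences are that the paper organizes the first step as a case distinction on $\lim_{n\to\infty}\mathcal G[u_n^q]$ (using Remark~\ref{Rem10} rather than the monotonicity of $\varphi(x)/x$) and cites Lieb's characterization of the Hardy-Littlewood-Sobolev optimizers instead of the Sobolev profile decomposition; both rest on the same facts.
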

\begin{proof} Assume by contradiction that $\mathcal C=1$. With the notations of Section~\ref{Sec:Linearization}, let us consider a minimizing sequence $(u_n)_{n\in\N}$ for the functional $u\mapsto\frac{\mathcal F[u]}{\mathcal G[u^q]}$ but assume that $\mathsf{J}_d[u_n^q]=\mathsf{J}_d[u_\star^q]=:\mathsf{J}_\star$ for any $n\in\N$. This condition is not restrictive because of the homogeneity of the inequality. It implies that $(\mathcal G[u_n^q])_{n\in\N}$ is bounded.

If $\lim_{n\to\infty}\mathcal G[u_n^q]>0$, then we also have $\mathsf{L}:=\lim_{n\to\infty}\mathcal F[u_n]>0$, at least up to the extraction of a subsequence. As a consequence we find that
\begin{multline*}
0=\lim_{n\to\infty}\(\mathsf{S}_d\,\mathsf{J}_\star^\frac4d\,\mathcal F[u_n]-\mathcal G[u_n^q]\)\\=\mathsf{S}_d\,\lim_{n\to\infty}\left[\mathsf{J}_\star^\frac4d\,\mathcal F[u_n]-\mathsf{J}_\star^{1+\frac{2}d}\,\varphi\(\mathsf{J}_\star^{\frac{2}d-1}\,\mathcal F[u_n]\)\right]\\+\lim_{n\to\infty}\left[\mathsf{S}_d\,\mathsf{J}_\star^{1+\frac{2}d}\,\varphi\(\mathsf{J}_\star^{\frac{2}d-1}\,\mathcal F[u_n]\)-\mathcal G[u_n^q]\right]\,,
\end{multline*}
a contradiction since the last term is nonnegative by Theorem~\ref{Thm:Improved} and, as observed in Remark~\ref{Rem10}, $\mathsf{J}_\star^{4/d}\,\mathcal F[u_n]-\mathsf{J}_\star^{1+2/d}\,\varphi\big(\mathsf{J}_\star^{2/d-1}\,\mathcal F[u_n]\big)$ is positive unless $\mathcal F[u_n]=0$.

Hence we know that $\mathsf{L}=\lim_{n\to\infty}\mathcal F[u_n]=0$ and $\lim_{n\to\infty}\mathcal G[u_n^q]=0$.
According to the caracterisation of minimizers of~$\mathcal G$ by Lieb~\cite[Theorem 3.1]{MR717827}, we know that up to translations and dilations,~$u_k$
converges to~$u_\star$.
Thus there exists~$f_k$ such that~$u_k = u_\star+f_k$ with~$f_k\to 0$, and then
\[
\frac 1{\mathcal C} = \frac{\mathsf S_d}{\mathsf C_d} = \lim_{k\to\infty} \frac{1}{d^2\,(d+2)^2} \frac{\mathsf F[f_k]}{\mathsf G[f_k]} \ge \frac{d+4}{d}\,.
\]
This shows that~$\mathcal C \le \frac{d}{d+4}$, a contradiction.
\end{proof}
We may observe that $\mathcal C<1$ means $\mathsf{C}_d<\mathsf{S}_d$. This completes the  the proof of Theorem~\ref{Thm:SquareSobolev}.

\section{Caffarelli-Kohn-Nirenberg inequalities and duality}\label{Sec:CKN}

Let $2^*:=\infty$ if $d=1$ or $2$, $2^*:=2\,d/(d-2)$ if $d\ge3$ and $a_c:=(d-2)/2$. Consider the space $\mathcal D_a^{1,2}(\R^d)$ obtained by completion of $\mathcal D(\R^d\setminus\{0\})$ with respect to the norm $u\mapsto\nrm{\,|x|^{-a}\,\nabla u\,}2^2$. In this section, we shall consider the Caffarelli-Kohn-Nirenberg inequalities
\be{Ineq:CKN}
\(\;\ird{\frac{|u|^p}{|x|^{bp}}}\)^{\frac2p}\le\mathsf{C}_{a,b}\ird{\frac{|\nabla u|^2}{|x|^{2a}}}
\ee
These inequalities generalize to $\mathcal D_a^{1,2}(\R^d)$ the Sobolev inequality~\eqref{Ineq:Sobolev} and in particular the exponent $p$ is given in terms of $a$ and $b$ by
\[
p=\frac{2\,d}{d-2+2\,(b-a)}
\]
as can be checked by a simple scaling argument. A precise statements on the range of validity of~\eqref{Ineq:CKN} goes as follows.
\begin{lem}\label{Lem:CKN}{\rm \cite{MR768824}} Let $d\ge 1$. For any $p\in [2, 2^*]$ if $d\ge3$ or $p\in [2, 2^*)$ if $d=1$ or $2$, there exists a positive constant $\mathsf{C}_{a,b}$ such that \eqref{Ineq:CKN} holds if $a$, $b$ and $p$ are related by $b=a-a_c+d/p$, with the restrictions $a<a_c$, $a\le b\le a+1$ if $d\ge3$, $a<b\le a+1$ if $d=2$ and $a+1/2<b\le a+1$ if $d=1$.\end{lem}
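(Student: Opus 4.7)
The natural approach is the Emden--Fowler change of variables that transforms inequality~\eqref{Ineq:CKN} into an unweighted Sobolev embedding on the cylinder $\mathcal C:=\R\times\S^{d-1}$. Setting $\alpha:=a_c-a>0$ (allowed since $a<a_c$) and introducing, for $u\in\mathcal D(\R^d\setminus\{0\})$, the new unknown
\[
v(t,\omega):=e^{-\alpha t}\,u(e^{-t}\omega)\,,\quad(t,\omega)\in\mathcal C\,,
\]
the relation $b=a-a_c+d/p$ rewrites as $\alpha+b=d/p$, and combined with $dx=e^{-dt}\,dt\,d\omega$ this yields the clean identity
\[
\ird{\frac{|u|^p}{|x|^{bp}}}=\int_{\mathcal C}|v|^p\,dt\,d\omega\,.
\]

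For the Dirichlet integral, decomposing $\nabla u$ into radial and angular components and integrating by parts in $t$ to eliminate the cross term $2\alpha\int v\,v_t\,dt\,d\omega=\alpha\int\partial_t(v^2)\,dt\,d\omega=0$, a direct computation gives
\[
\ird{\frac{|\nabla u|^2}{|x|^{2a}}}=\int_{\mathcal C}\!\left[v_t^2+|\nabla_\omega v|^2+\alpha^2\,v^2\right]dt\,d\omega\,.
\]
Inequality~\eqref{Ineq:CKN} is therefore equivalent to
\[
\|v\|_{\L^p(\mathcal C)}^2\le\mathsf{C}_{a,b}\,\Big(\|\nabla_{t,\omega}v\|_{\L^2(\mathcal C)}^2+\alpha^2\,\|v\|_{\L^2(\mathcal C)}^2\Big)\,,
\]
which is exactly the continuity of the embedding $\H^1(\mathcal C)\hookrightarrow\L^p(\mathcal C)$; the strict positivity $\alpha^2>0$ renders the right-hand side an equivalent norm on $\H^1(\mathcal C)$.

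Since $\mathcal C$ is a $d$-dimensional complete Riemannian product of Euclidean space and a compact sphere, the classical Sobolev embedding supplies such a constant for every $p\in[2,2d/(d-2)]$ when $d\ge3$, and for every $p\in[2,\infty)$ when $d=1$ or $d=2$. Translating these ranges of $p$ back through $p=2d/(d-2+2(b-a))$ reproduces exactly the restrictions on $(a,b)$ stated in the lemma: the endpoint $p=2$ corresponds to $b=a+1$ and is the (weighted) Hardy inequality, trivially obtained by retaining only $\alpha^2\|v\|_{\L^2}^2$ on the right; the opposite endpoint $p=2^*$ (for $d\ge3$) corresponds to $b=a$ and is the critical weighted Sobolev case. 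The strict lower bound on $b$ in low dimension ($a<b$ for $d=2$, $a+1/2<b$ for $d=1$) corresponds to $p=\infty$, where $\H^1(\mathcal C)\hookrightarrow\L^\infty(\mathcal C)$ fails.

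The principal technical point is a density argument extending the inequality from $\mathcal D(\R^d\setminus\{0\})$ to the completion $\mathcal D_a^{1,2}(\R^d)$. The cylindrical reduction itself is routine algebra once $\alpha$ is chosen to annihilate the weight in the gradient term; the real content lies in the Sobolev embedding on $\mathcal C$, and the role of the assumption $a<a_c$ is precisely to produce a strictly positive coefficient $\alpha^2$ in front of $\|v\|_{\L^2}^2$, promoting the reduced inequality from a scaling-critical estimate to a genuine Sobolev embedding into which the constant $\mathsf{C}_{a,b}$ can be absorbed.
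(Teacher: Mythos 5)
Your proposal is correct, but there is nothing in the paper to compare it against: Lemma~\ref{Lem:CKN} is quoted from \cite{MR768824} and the paper supplies no proof of its own. What you have written is the standard cylindrical (Emden--Fowler) reduction in the spirit of Catrina and Wang \cite{Catrina-Wang-01}, and it is sound: the choice $\alpha=a_c-a>0$ kills the weight, the relation $b=a-a_c+d/p$ is exactly what makes the $\L^p$ integral unweighted on $\mathcal C=\R\times\S^{d-1}$, the cross term integrates to zero by compact support, and the inequality becomes the embedding $\H^1(\mathcal C)\hookrightarrow\L^p(\mathcal C)$ on a $d$-dimensional manifold with bounded geometry, with the correct ranges of $p$; density then extends it to $\mathcal D^{1,2}_a(\R^d)$. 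This is consistent with (indeed, it is the one-dimensional analogue of) the transformation~\eqref{Eqn:Emden-Fowler} that the paper uses for radial functions in Section~\ref{Sec:Square}, whereas the original argument of Caffarelli, Kohn and Nirenberg proceeds by direct interpolation estimates in $\R^d$; your route buys an explicit identification of the inequality with an unweighted Sobolev embedding, at the cost of having to invoke the embedding on a non-compact product manifold. One small inaccuracy in your closing remark: for $d=1$ the cylinder degenerates to two copies of $\R$ and $\H^1(\R)\hookrightarrow\L^\infty(\R)$ does \emph{not} fail, so the exclusion of $b=a+1/2$ in that case is not explained by a failure of the embedding; but since the lemma only asserts the inequality for $p<2^*=\infty$ when $d\le2$, this does not affect the proof of the statement as given.
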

At least for radial solutions in $\R^d$, weights can be used to work as in Section~\ref{Sec:Square} as if the dimension $d$ was replaced by the \emph{dimension} $(d-2a)$. We will apply this heuristic idea to the case $d=2$ and $a<0$, $a\to0$ in order to prove Theorem~\ref{Thm:Onofri}.  See~\ref{Sec:CKNreview} for symmetry results for optimal functions in~\eqref{Ineq:CKN}.

\medskip On $\mathcal D_a^{1,2}(\R^d)$, let us define the functionals
\[
\mathsf{F}_1[u]:=\frac{1}2\(\;\ird{\frac{|u|^p}{|x|^{bp}}}\)^{\frac2p}\quad\mbox{and}\quad\mathsf{F}_2[u]:=\frac{1}2\,\mathsf{C}_{a,b}\ird{\frac{|\nabla u|^2}{|x|^{2a}}}
\]
so that Inequality~\eqref{Ineq:CKN} amounts to $\mathsf{F}_1[u]\le\mathsf{F}_2[u]$. Assume that $\scalar\cdot\cdot$ denotes the natural scalar product on $\L^2\big(\R^d,|x|^{-2a}\,dx\big)$, that is,
\[
\scalar uv:=\ird{\frac{u\,v}{|x|^{2a}}}
\]
and denote by $\|u\|=\scalar uu^{1/2}$ the corresponding norm. Consider the operators
\begin{multline*}
\mathsf{A}_a\,u:=\nabla u\,,\quad\mathsf{A}_a^*\,w:=-\nabla\cdot w+\,2a\,\frac{x}{|x|^2}\cdot w\\
\mbox{and}\quad\mathsf{L}_a\,u:=\mathsf{A}_a^*\,\mathsf{A}_a\,u=-\,\Delta u+\,2a\,\frac{x}{|x|^2}\cdot\nabla u
\end{multline*}
defined for $u$ and $w$ respectively in $\L^2\big(\R^d,|x|^{-2a}\,dx\big)$ and $\L^2\big(\R^d,|x|^{-2a}\,dx\big)^d$. Elementary integrations by parts show that
\[
\scalar u{\mathsf{L}_a\,u}=\scalar{\mathsf{A}_a\,u}{\mathsf{A}_a\,u}=\|\mathsf{A}_a\,u\|^2=\ird{\frac{|\nabla u|^2}{|x|^{2a}}}\,.
\]
If we define the Legendre dual of $\mathsf{F}_i$ by $\mathsf{F}_i^*[v]=\sup_{u\in\mathcal D_a^{1,2}(\R^d)}\(\scalar uv-\mathsf{F}_i[u]\)$, then it is clear that we formally have the inequality $\mathsf{F}_2^*[v]\le\mathsf{F}_1^*[v]$ for any $v\in\L^q(\R^d,|x|^{-(2a-\,b)\,q}\,dx)\cap\mathsf{L}_a(\mathcal D^{1,2}_a(\R^d))$, where $q$ is H\"older's conjugate of $p$, \emph{i.e.}
\[
\frac{1}p+\frac{1}q=1\,.
\]
Using the invertibility of $\mathsf{L}_a$, we indeed observe that
\[
\mathsf{F}_2^*[v]=\scalar uv-\mathsf{F}_2[u]\quad\mbox{with}\quad v=\mathsf{C}_{a,b}\,\mathsf{L}_a\,u\;\Longleftrightarrow u=\frac{1}{\mathsf{C}_{a,b}}\,\mathsf{L}_a^{-1}\,v\,,
\]
hence proving that
\[
\mathsf{F}_2^*[v]=\frac{1}{2\,\mathsf{C}_{a,b}}\,\scalar v{\mathsf{L}_a^{-1}\,v}\,.
\]
Similarly, we get that $\mathsf{F}_1^*[v]=\scalar uv-\mathsf{F}_1[u]$ with
\be{uv}
|x|^{-\,2a}\,v=\kappa^{2-p}\,|x|^{-\,bp}\,u^{p-1}
\ee
and
\[
\kappa=\(\;\ird{\frac{|u|^p}{|x|^{bp}}}\)^\frac{1}p=\scalar uv=\(\;\ird{\frac{|v|^q}{|x|^{(2a-\,b)\,q}}}\)^\frac{1}q\,,
\]
that is
\[
\mathsf{F}_1^*[v]=\frac{1}2\(\;\ird{\frac{|v|^q}{|x|^{(2a-\,b)\,q}}}\)^{\frac2q}\,.
\]
This proves the following result.
\begin{lem}\label{Lem:DualCKN} With the above notations and under the same assumptions as in Lemma~\ref{Lem:CKN}, we have
\begin{multline*}
\frac{1}{\mathsf{C}_{a,b}}\,\scalar v{\mathsf{L}_a^{-1}\,v}\le\(\;\ird{\frac{|v|^q}{|x|^{(2a-\,b)\,q}}}\)^{\frac2q}\\
\forall\,v\in\L^q(\R^d,|x|^{-(2a-\,b)\,q}\,dx)\cap\mathsf{L}_a(\mathcal D^{1,2}_a(\R^d))\,.
\end{multline*}
\end{lem}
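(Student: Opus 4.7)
My plan is to deduce the inequality as a direct application of Legendre duality to the Caffarelli-Kohn-Nirenberg inequality of Lemma~\ref{Lem:CKN}. The abstract principle is that whenever $\mathsf{F}_1[u]\le\mathsf{F}_2[u]$ pointwise on $\mathcal D_a^{1,2}(\R^d)$, one automatically has $\mathsf{F}_2^*[v]\le\mathsf{F}_1^*[v]$ for every $v$ in the intersection of the domains of definition: indeed,
\[
\mathsf{F}_2^*[v]=\sup_u\(\scalar uv-\mathsf{F}_2[u]\)\le\sup_u\(\scalar uv-\mathsf{F}_1[u]\)=\mathsf{F}_1^*[v]\,.
\]
Everything therefore reduces to identifying the two Legendre transforms with the explicit expressions given in the paragraph preceding the lemma.

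For $\mathsf{F}_2^*[v]$, I would solve the Euler--Lagrange equation $v=\mathsf{C}_{a,b}\,\mathsf{L}_a\,u$ associated with the quadratic supremum, which is possible once $\mathsf{L}_a$ is invertible on the appropriate range. Plugging back $u=\mathsf{C}_{a,b}^{-1}\,\mathsf{L}_a^{-1}\,v$ and using the self-adjointness relation $\scalar u{\mathsf{L}_a\,u}=\|\mathsf{A}_a u\|^2$ (established by the integration by parts just before the lemma) yields
\[
\mathsf{F}_2^*[v]=\tfrac{1}{2\,\mathsf{C}_{a,b}}\scalar v{\mathsf{L}_a^{-1}\,v}\,.
\]
For $\mathsf{F}_1^*[v]$, the corresponding Euler--Lagrange equation is precisely \eqref{uv}, and the Lagrange multiplier is $\kappa=\scalar uv$ as displayed in the text; substituting leads to $\mathsf{F}_1^*[v]=\tfrac12\bigl(\ird{|v|^q/|x|^{(2a-b)q}}\bigr)^{2/q}$, where the weight $|x|^{-(2a-b)q}$ is forced by homogeneity once the relation $1/p+1/q=1$ is imposed. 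Combining these two identifications with the abstract duality $\mathsf{F}_2^*\le\mathsf{F}_1^*$ gives the inequality in the statement.

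The main technical issue to take care of is the functional-analytic setting of $\mathsf{L}_a^{-1}$: one must check that $\mathsf{L}_a:\mathcal D_a^{1,2}(\R^d)\to\mathcal D_a^{1,2}(\R^d)^*$ is an isomorphism, so that $\mathsf{L}_a^{-1}v$ is well-defined for $v\in\mathsf{L}_a(\mathcal D_a^{1,2}(\R^d))$, and that the suprema in the definitions of the Legendre transforms are indeed attained on the dense subspace $\mathcal D(\R^d\setminus\{0\})$. This is routine given the coercivity of $\|\mathsf{A}_a u\|^2=\ird{|\nabla u|^2/|x|^{2a}}$ on $\mathcal D_a^{1,2}(\R^d)$ and the embedding guaranteed by Lemma~\ref{Lem:CKN}, so the bulk of the argument is indeed the duality observation itself; no new inequality needs to be proven.
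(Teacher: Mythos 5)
Your argument is correct and is essentially identical to the paper's own proof: the paper likewise observes that $\mathsf{F}_1[u]\le\mathsf{F}_2[u]$ implies $\mathsf{F}_2^*[v]\le\mathsf{F}_1^*[v]$ by Legendre duality, and then identifies $\mathsf{F}_2^*[v]=\tfrac1{2\,\mathsf{C}_{a,b}}\scalar v{\mathsf{L}_a^{-1}v}$ via the invertibility of $\mathsf{L}_a$ and $\mathsf{F}_1^*[v]$ via the relation \eqref{uv}. Your additional remarks on the functional-analytic setting only make explicit what the paper treats as formal, so no further changes are needed.
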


The next step is based on the completion of the square. Let us compute
\begin{multline*}
\|\mathsf{A}_a\,u-\lambda\,\mathsf{A}_a\,\mathsf{L}_a^{-1}\,v\|^2\\
=\|\mathsf{A}_a\,u\|^2-\,2\,\lambda\,\scalar{\mathsf{A}_a\,u}{\mathsf{A}_a\,\mathsf{L}_a^{-1}\,v}+\,\lambda^2\,\scalar{\mathsf{A}_a\,\mathsf{L}_a^{-1}\,v}{\mathsf{A}_a\,\mathsf{L}_a^{-1}\,v}\\
=\|\mathsf{A}_a\,u\|^2-\,2\,\lambda\,\scalar uv+\,\lambda^2\,\scalar v{\mathsf{L}_a^{-1}\,v}\,.
\end{multline*}
With the choice $\lambda=1/\mathsf{C}_{a,b}$ and $v$ given by \eqref{uv}, we have proved the following
\begin{thm}\label{Thm:CKNsquare} Under the assumptions of Lemma~\ref{Lem:CKN} and with the above notations, for any $u\in\mathcal D^{1,2}_a(\R^d)$ and any $v\in\L^q(\R^d,|x|^{-(2a-\,b)\,q}\,dx)\cap\mathsf{L}_a(\mathcal D^{1,2}_a(\R^d))$ we have
\begin{multline*}
0\le\(\,\ird{\frac{|v|^q}{|x|^{(2a-\,b)\,q}}}\)^{\frac2q}-\frac{1}{\mathsf{C}_{a,b}}\,\scalar v{\mathsf{L}_a^{-1}\,v}\\
\le\mathsf{C}_{a,b}\ird{\frac{|\nabla u|^2}{|x|^{2a}}}-\(\,\ird{\frac{|u|^p}{|x|^{bp}}}\)^{\frac2p}
\end{multline*}
if $u$ and $v$ are related by \eqref{uv}, if $a$, $b$ and $p$ are such that $b=a-a_c+d/p$ and verify the conditions of Lemma~\ref{Lem:CKN}, and if $q=p/(p-1)$.\end{thm}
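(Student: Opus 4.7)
The plan is straightforward because most of the work has already been done in the paragraphs immediately preceding the statement: the lower bound in the displayed chain is nothing but Lemma~\ref{Lem:DualCKN}, so I only need to produce the upper bound and then concatenate. The key mechanism is the completion of the square that was spelled out just above, and my role is to assemble the pieces into a proof.

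First I would start from the elementary nonnegativity
\[
0\le\|\mathsf{A}_a\,u-\lambda\,\mathsf{A}_a\,\mathsf{L}_a^{-1}\,v\|^2
\]
for any $\lambda\in\R$, and expand using $\mathsf{L}_a=\mathsf{A}_a^*\mathsf{A}_a$, the identity $\scalar{\mathsf{A}_a\,u}{\mathsf{A}_a\,\mathsf{L}_a^{-1}\,v}=\scalar{u}{\mathsf{L}_a\,\mathsf{L}_a^{-1}\,v}=\scalar{u}{v}$, and the analogous computation $\scalar{\mathsf{A}_a\,\mathsf{L}_a^{-1}\,v}{\mathsf{A}_a\,\mathsf{L}_a^{-1}\,v}=\scalar{v}{\mathsf{L}_a^{-1}\,v}$; this is exactly the expansion already displayed in the excerpt and requires only that $v$ lie in $\mathsf{L}_a(\mathcal D^{1,2}_a(\R^d))$ so that $\mathsf{L}_a^{-1}\,v$ makes sense in $\mathcal D^{1,2}_a(\R^d)$.

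Next I would specialize to $\lambda=1/\mathsf{C}_{a,b}$ and insert the correspondence \eqref{uv} between $u$ and $v$. A direct computation using H\"older's conjugacy $1/p+1/q=1$ and the defining identity $\kappa=\bigl(\ird{|u|^p/|x|^{bp}}\bigr)^{1/p}=\bigl(\ird{|v|^q/|x|^{(2a-b)q}}\bigr)^{1/q}$ gives
\[
\scalar{u}{v}=\kappa^2=\(\,\ird{\frac{|u|^p}{|x|^{bp}}}\)^{2/p}\,,
\]
so the expansion becomes, after rearrangement,
\[
\frac{1}{\mathsf{C}_{a,b}}\scalar{v}{\mathsf{L}_a^{-1}\,v}\le\mathsf{C}_{a,b}\ird{\frac{|\nabla u|^2}{|x|^{2a}}}-\(\,\ird{\frac{|u|^p}{|x|^{bp}}}\)^{2/p}\,,
\]
which is precisely the upper half of the claimed chain. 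Concatenating with Lemma~\ref{Lem:DualCKN}, which provides the inequality $\scalar{v}{\mathsf{L}_a^{-1}\,v}/\mathsf{C}_{a,b}\le\bigl(\ird{|v|^q/|x|^{(2a-b)q}}\bigr)^{2/q}$ for any admissible $v$, closes the argument.

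There is no real obstacle here, since the heart of the matter --- the nonexistence of a nontrivial kernel of $\mathsf{L}_a$ on $\mathcal D^{1,2}_a(\R^d)$ and the Legendre-duality step needed for Lemma~\ref{Lem:DualCKN} --- has already been established. The mildest point to check is that the pairing $\scalar{u}{v}$ is absolutely integrable under the integrability hypotheses on $u$ and $v$, which is a direct H\"older estimate in the measure $|x|^{-2a}\,dx$ with exponents $p$ and $q$, using that the weights $|x|^{-bp}$ and $|x|^{-(2a-b)q}$ factor correctly thanks to the relation $b=a-a_c+d/p$. Once this is verified, the proof reduces to the two displays above.
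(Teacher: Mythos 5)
Your proposal follows exactly the route of the paper: the lower bound is quoted from Lemma~\ref{Lem:DualCKN}, and the upper bound comes from expanding $\|\mathsf{A}_a\,u-\lambda\,\mathsf{A}_a\,\mathsf{L}_a^{-1}\,v\|^2\ge0$ with $\lambda=1/\mathsf{C}_{a,b}$ and $v$ given by \eqref{uv}. Your computation $\scalar uv=\kappa^2$ is the right one (the paper's own display $\kappa=\scalar uv$ is off by a power of $\kappa$ unless one normalizes $\kappa=1$), and the integrability remark at the end is a sensible, if routine, addition.

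There is, however, a slip in your final display. Multiplying the expanded square by $\mathsf{C}_{a,b}$ gives
\[
0\le\mathsf{C}_{a,b}\ird{\frac{|\nabla u|^2}{|x|^{2a}}}-2\,\kappa^2+\frac1{\mathsf{C}_{a,b}}\,\scalar v{\mathsf{L}_a^{-1}\,v}\,,
\]
and to reach the theorem you must split $2\,\kappa^2=\kappa^2+\kappa^2$, reading one copy as $\(\ird{|v|^q\,|x|^{-(2a-b)q}}\)^{2/q}$ on the left and the other as $\(\ird{|u|^p\,|x|^{-bp}}\)^{2/p}$ on the right. The inequality you actually wrote, $\frac1{\mathsf{C}_{a,b}}\scalar v{\mathsf{L}_a^{-1}\,v}\le\mathsf{C}_{a,b}\ird{|\nabla u|^2\,|x|^{-2a}}-\kappa^2$, drops the term $\(\ird{|v|^q\,|x|^{-(2a-b)q}}\)^{2/q}$ from the left-hand side; it is not the upper half of the claimed chain and does not follow from the expansion as performed. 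This is a one-line fix, not a conceptual gap, but as written the concluding display does not prove the stated theorem.
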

If, instead of \eqref{uv}, we simply require that
\[
|x|^{-\,2a}\,v=|x|^{-\,bp}\,u^{p-1}\,,
\]
then the inequality becomes
\begin{multline*}
0\le\mathsf{C}_{a,b}\(\,\ird{\frac{|v|^q}{|x|^{(2a-\,b)\,q}}}\)^{\frac2q}-\scalar v{\mathsf{L}_a^{-1}\,v}\\
\le\mathsf{C}_{a,b}\(\,\ird{\frac{|u|^p}{|x|^{bp}}}\)^{\frac2p(p-2)}\left[\mathsf{C}_{a,b}\ird{\frac{|\nabla u|^2}{|x|^{2a}}}-\(\,\ird{\frac{|u|^p}{|x|^{bp}}}\)^{\frac2p}\right]
\end{multline*}
Hence Theorem~\ref{Thm:CKNsquare} generalizes Theorem~\ref{Thm:SquareSobolev}, which is recovered in the special case $a=b=0$, $d\ge3$. Because of the positivity of the l.h.s.~due to Lemma~\ref{Lem:DualCKN}, the inequality in Theorem~\ref{Thm:CKNsquare} is an improvement of the Caffarelli-Kohn-Nirenberg inequality~\eqref{Ineq:CKN}. It can also be seen as an interpolation result, namely
\begin{multline*}
2\(\;\ird{\frac{|v|^q}{|x|^{(2a-\,b)\,q}}}\)^{\frac2q}=2\(\;\ird{\frac{|u|^p}{|x|^{bp}}}\)^{\frac2p}\\\
\le\mathsf{C}_{a,b}\ird{\frac{|\nabla u|^2}{|x|^{2a}}}+\frac{1}{\mathsf{C}_{a,b}}\,\scalar v{\mathsf{L}_a^{-1}\,v}
\end{multline*}
whenever $u$ and $v$ are related by \eqref{uv}. The explicit value of $\mathsf{C}_{a,b}$ is not known unless equality in~\eqref{Ineq:CKN} is achieved by radial functions, that is when \emph{symmetry} holds. See Proposition~\ref{Prop:Symmetry} in \ref{Sec:CKNreview} for some symmetry results. Now, as in \cite{MR2437030}, we may investigate the limit $(a,b)\to(0,0)$ with $b=\alpha\,a/(1+\alpha)$ in order to investigate the Onofri limit case. A key observation is that optimality in~\eqref{Ineq:CKN} is achieved by radial functions for any $\alpha\in(-1,0)$ and $a<0$, $|a|$ small enough. In that range $\mathsf{C}_{a,b}$ is known and given by~\eqref{sharp}.

\begin{proof}[Proof of Theorem~\ref{Thm:Onofri} (continued)] Theorem~\ref{Thm:Onofri} has been established for radial functions in Section~\ref{Sec:Square}. Now we investigate the general case. We shall restrict our purpose to the case of dimension $d=2$. For any $\alpha\in(-1,0)$, let us denote by $d\mu_\alpha$ the probability measure on $\R^2$ defined by $d\mu_\alpha:=\mu_\alpha\,dx$ where
\[
\mu_\alpha:=\frac{1+\alpha}\pi\,\frac{|x|^{2\,\alpha}}{(1+|x|^{2\,(1+\alpha)})^2}\,.
\]
It has been established in~\cite{MR2437030} that
\be{MTalpha}
\log\(\int_{\R^2}e^{\,u}\;d\mu_\alpha\)-\int_{\R^2} u\;d\mu_\alpha\le\frac{1}{16\,\pi\,(1+\alpha)}\,\irdeux{|\nabla u|^2}\quad\forall\;u\in\mathcal D(\R^2)\,,
\ee
where $\mathcal D(\R^2)$ is the space of smooth functions with compact support. By density with respect to the natural norm defined by each of the inequalities, the result also holds on the corresponding Orlicz space.

We adopt the strategy of \cite[Section~2.3]{MR2437030} to pass to the limit in~\eqref{Ineq:CKN} as $(a,b)\to(0,0)$ with $b=\frac{\alpha}{\alpha + 1}\,a$. Let
\[
a_\varepsilon=-\frac{\varepsilon}{1-\varepsilon}\,(\alpha+1)\,,\quad b_\varepsilon=a_\varepsilon+\varepsilon ,\quad p_\varepsilon=\frac{2}\varepsilon\,,
\]
and
\[
    u_\varepsilon (x) = \left(1+|x|^{2\,(\alpha+1)}\right)^{-\frac\varepsilon{1-\varepsilon}}\,,
\]
assuming that $u_\varepsilon$ is an optimal function for \eqref{Ineq:CKN}, define
\[
    \kappa_\varepsilon =\int_{\R^2} \left[ \frac{u_\varepsilon}{|x|^{a_\varepsilon + \varepsilon}} \right]^{2/\varepsilon}\, dx = \int_{\R^2}\frac{|x|^{2\,\alpha}}{\big(1+|x|^{2\,(1+\alpha)}\big)^2}\,\frac{u_\varepsilon^2}{|x|^{2a_\varepsilon}}\,dx =\frac\pi{\alpha+1}\,\frac{\Gamma\big(\frac1{1-\varepsilon}\big)^2}{\Gamma\big(\frac2{1-\varepsilon}\big)}\,,
\]
\[
    \lambda_\varepsilon = \int_{\R^2}\left[ \frac{|\nabla u_\varepsilon|}{|x|^a} \right]^2\,dx = 4\,a_\varepsilon^2\int_{\R^2}\frac{|x|^{2\,(2\,\alpha+1-a_\varepsilon)}}{\big(1+|x|^{2\,(1+\alpha)}\big)^{\frac2{1-\varepsilon}}}\,dx= 4\,\pi\,\frac{|a_\varepsilon|}{1-\varepsilon}\,\frac{\Gamma\big(\frac{1}{1-\varepsilon}\big)^2}{\Gamma\big(\frac{2}{1-\varepsilon}\big)}\,.
\]
Then $w_\varepsilon=(1+\frac{1}2\,\varepsilon\,u)\,u_\varepsilon$ is such that
\begin{eqnarray*}
&&\lim_{\varepsilon\to0_+}\frac{1}{\kappa_\varepsilon}\int_{\R^2}\frac{|w_\varepsilon|^{p_\varepsilon}}{|x|^{b_\varepsilon{p_\varepsilon}}}\,dx=\int_{\R^2}e^u\,d\mu_\alpha\,,\\
&&\lim_{\varepsilon\to0_+}\frac{1}\varepsilon\left[\frac{1}{\lambda_\varepsilon}\int_{\R^2}\frac{|\nabla w_\varepsilon|^2}{|x|^{2a_\varepsilon}}\,dx-1\right]=\int_{\R^2}u\,d\mu_\alpha+\frac1{16\,(1+\alpha)\,\pi}\,\nrmdeux{\nabla u}2^2\,.
\end{eqnarray*}
Hence we can recover \eqref{MTalpha} by passing to the limit in~\eqref{Ineq:CKN} as $\varepsilon\to0_+$. On the other hand, if we pass to the limit in the inequality stated in Theorem~\ref{Thm:CKNsquare}, we arrive at the following result, for any $\alpha\in(-1,0)$.
\begin{thm}\label{Thm:Onofrisquare} Let $\alpha\in(-1,0]$. With the above notations, we have
\begin{multline*}
0\le\irdeux{\kern-3pt v\,\log\(\frac{v}{\mu_\alpha}\)}-4\,\pi\,(1+\alpha)\irdeux{\kern-3pt (v-\mu_\alpha)\,(-\Delta)^{-1}\,(v-\mu_\alpha)}\\
\le\frac1{16\,\pi\,(1+\alpha)}\irdeux{|\nabla u|^2}-\log\(\int_{\R^2}e^{\,u}\;d\mu_\alpha\)+\int_{\R^2} u\;d\mu_\alpha
\end{multline*}
for any $u\in\mathcal D$, where $u$ and $v$ are related by
\[
v=\frac{e^u\,\mu_\alpha}{\irdmua{e^u}}\,.
\]
\end{thm}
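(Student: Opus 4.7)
The plan is to apply Theorem~\ref{Thm:CKNsquare} in dimension $d=2$ along the family of parameters
\[
a_\varepsilon=-\tfrac{\varepsilon}{1-\varepsilon}(\alpha+1),\quad b_\varepsilon=a_\varepsilon+\varepsilon,\quad p_\varepsilon=\tfrac{2}{\varepsilon},\quad q_\varepsilon=\tfrac{p_\varepsilon}{p_\varepsilon-1}=\tfrac{2}{2-\varepsilon},
\]
with the primal test $w_\varepsilon=(1+\tfrac{\varepsilon}{2}u)\,u_\varepsilon$ and its dual $v_\varepsilon$ determined by~\eqref{uv}, and then pass to the limit $\varepsilon\to 0_+$. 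For $\alpha\in(-1,0]$ the coupling $b_\varepsilon=\tfrac{\alpha}{\alpha+1}a_\varepsilon$ places us in the symmetry range of the Caffarelli-Kohn-Nirenberg inequality (see~\ref{Sec:CKNreview}), so $u_\varepsilon$ is extremal and $\mathsf{C}_{a_\varepsilon,b_\varepsilon}=\kappa_\varepsilon^{2/p_\varepsilon}/\lambda_\varepsilon$ is given explicitly by~\eqref{sharp}.

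The right-hand side of the inequality in Theorem~\ref{Thm:CKNsquare} has essentially been expanded a few lines above in the derivation of~\eqref{MTalpha}: using $(1+\tfrac{\varepsilon}{2}u)^{p_\varepsilon-1}\to e^u$ and the two limits already recorded there, one checks that after multiplication by $\varepsilon^{-1}\,\kappa_\varepsilon^{-2/p_\varepsilon}$ this right-hand side tends to
\[
\tfrac{1}{16\pi(1+\alpha)}\irdeux{|\nabla u|^2}-\log\irdmua{e^u}+\irdmua u,
\]
which is exactly the r.h.s. of Theorem~\ref{Thm:Onofrisquare}. The real work is on the dual side. First I would verify that the pointwise formula $v_\varepsilon(x)=\kappa_\varepsilon^{2-p_\varepsilon}\,|x|^{2a_\varepsilon-b_\varepsilon p_\varepsilon}\,w_\varepsilon^{p_\varepsilon-1}(x)$ yields, after rescaling by its leading factor, the convergence of the normalized density to $v=e^u\mu_\alpha/\irdmua{e^u}$; the factors $|x|^{2a_\varepsilon-b_\varepsilon p_\varepsilon}$ and $u_\varepsilon^{p_\varepsilon-1}$ combine into $|x|^{2\alpha}(1+|x|^{2(\alpha+1)})^{-2}$ in the limit, which is a constant multiple of $\mu_\alpha$.

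Next I would expand the two dual quantities at order $\varepsilon^2$. Using $v_\varepsilon^{q_\varepsilon}=v_\varepsilon\,\exp\bigl((q_\varepsilon-1)\log v_\varepsilon\bigr)$ together with $q_\varepsilon-1=\varepsilon/(2-\varepsilon)$, the $O(\varepsilon)$ correction in
\[
\Bigl(\ird{|v_\varepsilon|^{q_\varepsilon}|x|^{-(2a_\varepsilon-b_\varepsilon)q_\varepsilon}}\Bigr)^{2/q_\varepsilon}
\]
produces the relative entropy $\irdeux{v\,\log(v/\mu_\alpha)}$. For the second term I would use that $\mathsf{L}_{a_\varepsilon}\to-\Delta$ as $a_\varepsilon\to 0$, so that $\langle v_\varepsilon,\mathsf{L}_{a_\varepsilon}^{-1}v_\varepsilon\rangle$ expands into a quadratic form built out of $-\Delta^{-1}$; the constant piece corresponding to the zero-mode is absorbed by subtracting $\mu_\alpha$ from $v$, producing the Newtonian self-interaction $\irdeux{(v-\mu_\alpha)(-\Delta)^{-1}(v-\mu_\alpha)}$ at the relevant order, with prefactor $4\pi(1+\alpha)$ coming from $\kappa_0=\pi/(1+\alpha)$ and $\mathsf{C}_{a_\varepsilon,b_\varepsilon}\to \mathsf{C}_{0,0}$. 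Dividing the whole inequality of Theorem~\ref{Thm:CKNsquare} by $\tfrac{\varepsilon}{2}\kappa_\varepsilon^{2}\,\irdmua{e^u}$ and sending $\varepsilon\to 0_+$ then yields the announced inequality.

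The main obstacle will be bookkeeping the cancellations on the dual side: both $\bigl(\int|v_\varepsilon|^{q_\varepsilon}|x|^{-(2a_\varepsilon-b_\varepsilon)q_\varepsilon}\bigr)^{2/q_\varepsilon}$ and $\mathsf{C}_{a_\varepsilon,b_\varepsilon}^{-1}\langle v_\varepsilon,\mathsf{L}_{a_\varepsilon}^{-1}v_\varepsilon\rangle$ are $O(\kappa_0^{2})$ individually, and their leading pieces must cancel exactly so that the surviving remainder is of order $\varepsilon$ and matches the stated entropy-minus-interaction combination. Identifying precisely why $v-\mu_\alpha$ (rather than $v$) is the correct source in the $(-\Delta)^{-1}$ term -- i.e.\ tracking how the Lagrangian normalization $\int v\,dx=1$ couples with the fact that $\mu_\alpha$ is the $\varepsilon=0$ limit of the extremal density -- is the most delicate calculation; the remaining passages to the limit are routine once dominated-convergence bounds uniform in $\varepsilon$ are established in a neighbourhood of $\varepsilon=0$.
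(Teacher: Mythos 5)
Your proposal follows exactly the route the paper takes: apply Theorem~\ref{Thm:CKNsquare} with the parameters $a_\varepsilon$, $b_\varepsilon$, $p_\varepsilon$ and the test functions $w_\varepsilon=(1+\tfrac\varepsilon2\,u)\,u_\varepsilon$, use that $u_\varepsilon$ is a radial extremal in the symmetry range so that $\mathsf{C}_{a_\varepsilon,b_\varepsilon}=\kappa_\varepsilon^{2/p_\varepsilon}/\lambda_\varepsilon$, and pass to the limit $\varepsilon\to0_+$ on both sides. The paper is in fact terser than you are on the dual-side expansion (it simply asserts the passage to the limit, the detailed analogue being the radial computation of Corollary~\ref{cor:radial}), so your outline, including the identification of the $v-\mu_\alpha$ normalization as the delicate point, is a faithful and slightly more explicit version of the paper's argument.
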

The case $\alpha=0$ is achieved by taking the limit as $\alpha\to0_-$. Since $-\Delta\log \mu_\alpha=8\,\pi\,(1+\alpha)\,\mu_\alpha$ holds for any $\alpha\in(-1,0]$, the proof of Theorem~\ref{Thm:Onofri} is now completed, with $\mu=\mu_0$.\end{proof}

\appendix\section{Some useful formulae}\label{Sec:UsefulFormulae}

We recall that
\[
f(q):=\int_{\R}\frac{dt}{(\cosh t)^q}=\frac{\sqrt\pi\;\Gamma(\frac{q}2)}{\Gamma(\frac{q+1}2)}
\]
for any $q>0$. An integration by parts shows that $f(q+2)=\frac{q}{q+1}\,f(q)$. The following formulae are reproduced with no change from~\cite{DDFT} (also see~\cite{DEL2011,DE2013}). The function $w(t):=(\cosh t)^{-\frac2{p-2}}$ solves
\[
-(p-2)^2\,w''+4\,w-2\,p\,w^{p-1}=0
\]
and we can define
\[
\mathsf{I}_q:=\irt{|w(t)|^q}\quad\mbox{and}\quad \mathsf{J}_2:=\irt{|w'(t)|^2}\,.
\]
Using the function $f$, we can compute $\mathsf{I}_2=f\big(\frac{4}{p-2}\big)$, $\mathsf{I}_p=f\big(\frac{2\,p}{p-2}\big)=f\big(\frac{4}{p-2}+2\big)$ and get the relations
\[
\mathsf{I}_2=\frac{\sqrt{\pi}\;\Gamma\big(\frac2{p-2}\big)}{\Gamma\big(\frac{p+2}{2\,(p-2)}\big)}\,,\quad \mathsf{I}_p=\frac{4\,\mathsf{I}_2}{p+2}=\frac{4\,\sqrt\pi\,\Gamma\(\frac2{p-2}\)}{(p+2)\,\Gamma\(\frac{p+2}{2\,(p-2)}\)}\,,\quad \mathsf{J}_2=\frac{4\,\mathsf{I}_2}{(p+2)\,(p-2)}\,.
\]
In particular, this establishes~\eqref{Eqn:sd}, namely
\[
\mathsf{s}_d=\frac{\mathsf{I}_p^{1-\frac{2}d}}{\mathsf{J}_2+\frac14\,(d-2)^2\,\mathsf{I}_2}\,,\quad\mbox{with}\;p=\frac{2\,d}{d-2}
\]
for any $d>2$. The expression of the optimal constant in Sobolev's inequality~\eqref{Ineq:Sobolev}: $\mathsf{S}_d=\mathsf{s}_d\,|\S^{d-1}|^{-2/d}$, where
\[
|\S^{d-1}|=\frac{2\,\pi^{d/2}}{\Gamma(d/2)}
\]
denotes the volume of the unit sphere, for any integer $d\ge3$, follows from the duplication formula
\[
2^{d-1}\,\Gamma\(\tfrac d2\)\,\Gamma\(\tfrac{d+1}2\)=\sqrt\pi\;\Gamma(d)
\]
according for instance to \cite{MR0167642}. See \cite[Appendix B.4]{DoEsLa} for further details.

\section{Poincar\'e inequality and stereographic projection}\label{Sec:Stereographic}

On $\S^d\subset\R^{d+1}$, consider the coordinates $\omega=(\rho\,\phi,z)\in\R^d\times\R$ such that $\rho^2+z^2=1$, $z\in[-1,1]$, $\rho\ge0$ and $\phi\in\S^{d-1}$, and define the \emph{stereographic projection} $\Sigma:\S^d\setminus\{\mathrm N\}\to\R^d$ by $\Sigma(\omega)=x=r\,\phi$ and
\[
z=\frac{{}r^2-1}{r^2+1}=1-\frac{2}{r^2+1}\;,\quad \rho=\frac{{}2\,r}{r^2+1}\,.
\]
The \emph{North Pole} $\mathrm N$ corresponds to $z=1$ (and is formally sent at infinity) while the \emph{equator} (corresponding to $z=0$) is sent onto the unit sphere $\S^{d-1}\subset\R^d$. Now we can transform any function $v$ on $\S^d$ into a function $u$ on $\R^d$ using
\[
v(\omega)=\big(\tfrac r\rho\big)^\frac{d-2}2\,u(x)=\big(\tfrac{r^2+1}2\big)^\frac{d-2}2\,u(x)=(1-z)^{-\frac{d-2}2}\,u(x)\,.
\]
A standard computation shows that
\[
\int_{\S^d}|\nabla v|^2\;d\omega+\frac{1}4\,d\,(d-2)\int_{\S^d}|v|^2\;d\omega=\ird{|\nabla u|^2}
\]
and
\[\int_{\S^d}|v|^q\;d\omega=\ird{|u|^q\,\big(\tfrac2{1+|x|^2}\big)^{d-(d-2)\frac{q}2}}\,.
\]
On $\S^d$, the kernel of the Laplace-Beltrami operator is generated by the constants and the lowest positive eigenvalue is $\lambda_1=d$. The corresponding eigenspace is generated by $v_0(\omega)=1$ and $v_i(\omega)=\omega_i$, $i=1$, $2$, \ldots $d+1$. All eigenvalues of the Laplace-Beltrami operator are given by the formula
\[
\lambda_k=k\,(k+d-1)\quad\forall\,k\in\N
\]
according to \cite{MR0282313}. We still denote by $u_\star$ the Aubin-Talenti extremal function
\[
u_\star(x):=(1+|x|^2)^{-\frac{d-2}2}\quad\forall\,x\in\R^d\,.
\]
Using the inverse stereographic projection, the reader is invited to check that Sobolev's inequality is equivalent to the inequality
\[
\frac{4}{d\,(d-2)}\int_{\S^d}|\nabla v|^2\;d\omega+\int_{\S^d}|v|^2\;d\omega\ge|\S^d|^\frac2d\,\(\int_{\S^d}|v|^\frac{2\,d}{d-2}\;d\omega\)^\frac{d-2}d
\]
so that the Aubin-Talenti extremal function is transformed into a constant function on the sphere and incidentally this shows that
\[
\mathsf{S}_d=\frac{4}{d\,(d-2)}\,|\S^d|^{-\frac2d}\,.
\]

With these preliminaries on the Laplace-Beltrami operator and the stereographic projection in hand, we can now state the counterpart on $\R^d$ of the Poincar\'e inequality on $\S^d$.
\begin{lem}\label{Lem:Poincare} For any function $f\in\mathcal D^{1,2}(\R^d)$ such that
\begin{multline*}
\ird{f\,\frac{u_\star}{(1+|x|^2)^2}}=0\,,\quad\ird{f\,\frac{(1-|x|^2)\,u_\star}{(1+|x|^2)^3}}=0\,,\\
\mbox{and}\quad\ird{f\,\frac{x_i\,u_\star}{(1+|x|^2)^3}}=0\quad\forall\,i=1\,,\;2\,,\;\ldots d
\end{multline*}
the following inequality holds
\[
\ird{|\nabla f|^2}\ge(d+2)\,(d+4)\ird{\frac{f^2}{(1+|x|^2)^2}}\,.
\]
\end{lem}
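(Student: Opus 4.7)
The plan is to pull the inequality back to the sphere $\S^d$ via the inverse stereographic projection introduced in Section~\ref{Sec:Stereographic}, so that it becomes a standard Poincaré inequality for the Laplace--Beltrami operator. Given $f\in\mathcal D^{1,2}(\R^d)$, I would set $g(\omega):=\bigl(\tfrac{1+|x|^2}{2}\bigr)^{(d-2)/2}\,f(x)$ with $x=\Sigma(\omega)$; the conformal identity recalled in Section~\ref{Sec:Stereographic} then gives
\[
\ird{|\nabla f|^2}=\int_{\S^d}|\nabla g|^2\,d\omega+\tfrac{d(d-2)}{4}\int_{\S^d}g^2\,d\omega,
\]
and combining the definition of $g$ with the Jacobian $dx=\bigl(\tfrac{1+|x|^2}{2}\bigr)^{d}d\omega$ yields the companion identity
\[
\ird{\tfrac{f^2}{(1+|x|^2)^2}}=\tfrac14\int_{\S^d}g^2\,d\omega.
\]

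Plugging these two identities into the target inequality, and using the elementary algebra $(d+2)(d+4)-d(d-2)=8(d+1)$, the lemma reduces to
\[
\int_{\S^d}|\nabla g|^2\,d\omega\ge 2(d+1)\int_{\S^d}g^2\,d\omega.
\]
This is precisely the Poincaré inequality on $\S^d$ with the spectral gap $\lambda_2=2\cdot(2+d-1)$ coming from the third eigenspace of $-\Delta_{\S^d}$, so by the Rayleigh--Courant principle it holds whenever $g$ is $L^2(\S^d)$-orthogonal to the two lowest eigenspaces of $-\Delta_{\S^d}$, namely the constants (eigenvalue $\lambda_0=0$) and the coordinate functions $\omega_1,\dots,\omega_{d+1}$ (eigenvalue $\lambda_1=d$).

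It then remains to verify that the three hypotheses of the lemma encode exactly this orthogonality on the sphere. Noting that $u_\star(x)=(1+|x|^2)^{-(d-2)/2}$ pulls back under the above formula to a constant in the variable $g$, and using the stereographic relations $\omega_i=\tfrac{2x_i}{1+|x|^2}$ for $i=1,\dots,d$ together with $\omega_{d+1}=\tfrac{|x|^2-1}{1+|x|^2}$, the three weights $\tfrac{u_\star}{(1+|x|^2)^2}$, $\tfrac{x_i\,u_\star}{(1+|x|^2)^3}$, and $\tfrac{(1-|x|^2)\,u_\star}{(1+|x|^2)^3}$ paired with $f$ and integrated against $dx$ transform, after the change of variables, into nonzero multiples of $\int_{\S^d}g\,d\omega$, $\int_{\S^d}g\,\omega_i\,d\omega$, and $-\int_{\S^d}g\,\omega_{d+1}\,d\omega$ respectively. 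Hence the three hypotheses are equivalent to $g\perp 1,\omega_1,\dots,\omega_{d+1}$ in $L^2(\S^d)$, and the spectral inequality above closes the proof.

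The only real obstacle is accounting-level: keeping track of the powers of $(1+|x|^2)$ and of $2$ in the three change-of-variables computations so that the weights match the coordinate functions on~$\S^d$ exactly. The substantive content of the lemma is the spectral gap $\lambda_2-\lambda_1$ on $\S^d$.
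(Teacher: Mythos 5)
Your proposal is correct and follows essentially the same route as the paper: transfer the inequality to $\S^d$ via the stereographic projection identities of Appendix~B, observe that the three orthogonality hypotheses become orthogonality of $g$ to the constants and to $\omega_1,\dots,\omega_{d+1}$, and conclude from the spectral gap $\lambda_2=2(d+1)$, which after adding back $\tfrac14\,d\,(d-2)$ gives $\tfrac14\,(d+2)(d+4)$. Your write-up merely makes explicit the change-of-variables bookkeeping that the paper leaves to the reader.
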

\begin{proof} On the sphere we know that
\begin{align*}
\int_{\S^d}|\nabla v|^2\;d\omega+\frac{1}4\,d\,(d-2)\int_{\S^d}v^2\;d\omega&\ge\(\lambda_2+\frac14\,d\,(d-2)\)\int_{\S^d}v^2\;d\omega\\
&=\frac14\,(d+2)(d+4) \int_{\S^d}v^2\;d\omega
\end{align*}
if $v$ is orthogonal to $v_i$ for any $i=0$, $1$, \ldots $d+1$. The conclusion follows from the stereographic projection.\end{proof}

\section{Flow on the sphere and consequences}\label{Sec:FlowSphere}

We recall that Equation~\eqref{Eqn:FD} admits special solutions with \emph{separation of variables} given by
\be{Eqn:Separation}
v_\star(t,x)=\lambda^{(d+2)/2}\,(T-t)^{\frac{d+2}{4}}\,(u_\star((x-x_0)/\lambda))^\frac{d+2}{d-2}
\ee
where $u_\star(x):=(1+|x|^2)^{-(d-2)/2}$ is the Aubin-Talenti extremal function, $x\in\R^d$ and $0<t<T$. Such a solution is generic near the extinction time~$T$, in the following sense.
\begin{lem}\label{Thm:delPinoSaez}{\rm \cite{delPino-Saez01,MR2282669}}. For any solution $v$ of~\eqref{Eqn:FD} with nonnegative, not identically zero initial datum $v_0\in\L^{2d/(d+2)}(\R^d)$, there exists $T>0$, $\lambda>0$, $c>0$ and
$x_0\in\R^d$ such that $v(t,\cdot)\not\equiv0$ for any $t\in(0,T)$ and
\[
\lim_{t\to T_-}(T-t)^{-\frac{d+2}4}\,\sup_{x\in\R^d}(1+|x|^2)^\frac{d+2}2\,\left|\,\frac{v(t,x)}{v_\star(t,x)}-c\,\right|=0
\]
if $v_\star$ is defined by~\eqref{Eqn:Separation}.\end{lem}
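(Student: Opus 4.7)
The plan is to follow the classical strategy for asymptotic behavior near extinction for the critical fast diffusion equation, combining a self-similar change of variables with a Lyapunov/entropy analysis and a rigidity result for stationary profiles.

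First, I would establish that $v$ actually vanishes in finite time. From \eqref{Eqn:DynSobolev} the quantity $\mathsf{J}(t)=\|v(t,\cdot)\|_{\L^{2d/(d+2)}}^{2d/(d+2)}$ satisfies an autonomous differential inequality of the form $\mathsf J'\le -c\,\mathsf J^{1-2/d}$, whose integration forces $\mathsf J$ to vanish at some finite $T\le\tfrac{d+2}{4}\,\mathsf S_d\,\mathsf J_0^{2/d}$, and a standard strict positivity argument (parabolic minimum principle, since $m<1$) shows $v(t,\cdot)\not\equiv0$ for $t<T$. This produces the constant $T$ in the statement and fixes the time scale $(T-t)^{(d+2)/4}$ that appears in $v_\star$.

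Second, I would pass to self-similar variables adapted to the separated-variables solution. Writing $s=-\log((T-t)/T)$ and $w(s,y):=(T-t)^{-(d+2)/4}\,v(t,y)$, the rescaled function satisfies an autonomous nonlinear equation whose stationary positive solutions are exactly $y\mapsto \lambda^{(d+2)/2}\,u_\star((y-x_0)/\lambda)^{(d+2)/(d-2)}$, by the Caffarelli--Gidas--Spruck classification of entire positive solutions to $-\Delta U=\kappa\,U^{(d+2)/(d-2)}$. I would then identify a relative-entropy type Lyapunov functional whose derivative along the rescaled flow is nonnegative and vanishes only on the Aubin--Talenti family; the natural candidate is the functional whose dissipation is the Sobolev deficit $\mathsf S_d\,\|\nabla u\|_2^2-\|u\|_{2^*}^2$ already appearing in Section~\ref{Sec:Improvements} via the identity $\tfrac12\,\mathsf J^{-2/d}\,\mathsf H'=\mathsf S_d\,\|\nabla u\|_2^2-\|u\|_{2^*}^2$.

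Third, I would combine this entropy dissipation with the regularizing effects of the fast-diffusion equation (Aronson--B\'enilan type bounds and DiBenedetto's parabolic H\"older estimates) to obtain relative compactness of the orbit $\{w(s,\cdot)\}_{s\ge0}$ in a topology strong enough to pass to the limit in the Euler--Lagrange equation. The $\omega$-limit set is then invariant, dissipation-free and consists of Aubin--Talenti profiles; an argument ruling out oscillation (monotonicity of the entropy plus the rigidity of the set of equilibria modulo the finite-dimensional symmetry group of translations and dilations) selects a single limit and thereby pins down $c$, $\lambda$, $x_0$.

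The main obstacle is the strong mode of convergence demanded by the statement: the weight $(1+|x|^2)^{(d+2)/2}$ forces uniform control of $v/v_\star$ at spatial infinity, not merely in some $\L^p$ or local sup norm. This is what makes the result genuinely nontrivial and is where I would expect to spend most of the work. My plan for this step would be to construct matched sub- and super-solutions of the form $v_\star(1\pm\varepsilon)$ in the ``bulk'' region and $(T-t)^{(d+2)/4}\,c\,|x|^{-(d+2)}(1+o(1))$ in the ``tail'' region $|x|\gg1$, and to use the comparison principle for \eqref{Eqn:FD} together with the tail behavior of $v_0\in\L^{2d/(d+2)}(\R^d)$ to propagate closeness from the bulk to the far field uniformly in $t$ close to $T$.
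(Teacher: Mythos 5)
The first thing to say is that the paper does not prove this lemma at all: it is imported verbatim from \cite{delPino-Saez01} and \cite{MR2282669}, and is used as a black box in \ref{Sec:FlowSphere} to justify the computations behind Lemma~\ref{Lem:Concavity}. So there is no internal proof to compare against. That said, your outline is a faithful reconstruction of the strategy of the cited references: finite extinction time from the Sobolev-based differential inequality \eqref{Eqn:DynSobolev}, rescaling by $(T-t)^{-(d+2)/4}$, identification of the stationary profiles of the rescaled flow with the Aubin--Talenti family (Caffarelli--Gidas--Spruck), a Lyapunov functional whose dissipation is the Sobolev deficit, and a separate argument for the tail. You also correctly locate the genuinely hard point, namely the uniform convergence of the \emph{relative} error against the weight $(1+|x|^2)^{(d+2)/2}$.

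Two steps of your sketch would need substantial repair before this could be called a proof. First, the $\omega$-limit argument must rule out loss of compactness along the non-compact symmetry group: a priori the rescaled orbit could concentrate or escape ($\lambda(s)\to0$ or $\infty$, $|x_0(s)|\to\infty$), and excluding this is precisely where the solution-dependent constants $\lambda$, $x_0$, $c$ of the statement are produced; invoking ``rigidity modulo the symmetry group'' does not by itself prevent drift along that group. Second, and more concretely, the barriers $v_\star\,(1\pm\varepsilon)$ do not work as written: for the equation $v_t=\Delta v^m$ with $m<1$, a constant multiple $C\,v_\star$ is a \emph{supersolution} when $C\le1$ and a \emph{subsolution} when $C\ge1$ (since $\Delta(Cv_\star)^m=C^m\,\partial_t v_\star$ and $\partial_t v_\star<0$), which is exactly the wrong way around for trapping $v$ between them. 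The standard fix in the literature is to compare with exact separated-variables solutions \eqref{Eqn:Separation} having slightly shifted extinction times $T\pm\delta$ (together with a global Harnack principle giving two-sided bounds $c_1\,v_\star\le v\le c_2\,v_\star$ near $T$), and that is itself a nontrivial theorem. Since the lemma is quoted rather than proved in the paper, the honest conclusion is that your plan is the right one in outline but leaves the two hardest ingredients unproved.
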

If $v$ solves the \emph{fast diffusion} equation~\eqref{Eqn:FD} on $\R^d$, then we may use the inverse stereographic projection (see~\ref{Sec:Stereographic}) to define the function $w$ on $\S^d$ such that
\[
v(t,x)=e^{-\frac{d+2}4\,\tau}\,\(\tfrac2{1+r^2}\)^\frac{d+2}2\,w(\tau,y)
\]
where $\tau=-\log(T-t)$, $r=|x|$ and $y=\(\tfrac{2\,x}{1+r^2},\tfrac{1-r^2}{1+r^2}\)\in\S^d\subset\R^d\times\R$. 

With no loss of generality, assume that $c=\lambda=1$ and $x_0=0$. According to Lemma~\ref{Thm:delPinoSaez}, $w$ uniformly converges as $\tau\to\infty$  to $1$ on $\S^d$. Let $d\sigma_d$ denote the measure induced on $\S^d\subset\R^{d+1}$ by Lebesgue's measure on $\R^{d+1}$. We may then write
\[
\mathsf{J}(t)=e^{-\frac{d}2\tau}\isd{w^\frac{2\,d}{d+2}}
\]
and
\[
\ird{|\nabla u^\frac{d-2}{d+2}|^2}=e^{-\frac{d-2}2\tau}\(\isd{\big|\nabla w^\frac{d-2}{d+2}\big|^2}+\frac{1}4\,d\,(d-2)\isd{\big|w^\frac{d-2}{d+2}\big|^2}\)
\]
with $\tau=-\log(T-t)$, so that $\frac{d\tau}{dt}=e^\tau$. Hence $w$ solves
\[
w_\tau=\mathcal L\,w^\frac{d-2}{d+2}-\frac{1}4\,d\,(d-2)\,w^\frac{d-2}{d+2}+\frac14\,(d+2)\,w
\]
where $\mathcal L$ denotes the Laplace-Beltrami operator on the sphere $\S^d$, and
\[
\frac{d}{dt}\mathsf{J}=-\,\frac{2\,d}{d+2}\,e^{-\frac{d-2}2\,\tau}\(\isd{\big|\nabla w^\frac{d-2}{d+2}\big|^2}+\frac{1}4\,d\,(d-2)\isd{\big|w^\frac{d-2}{d+2}\big|^2}\)\,,
\]
\[
\frac{d}{dt}\ird{|\nabla u^\frac{d-2}{d+2}|^2}=-\,2\,\frac{d-2}{d+2}\isd{\(\mathcal L\,w^\frac{d-2}{d+2}-\frac{1}4\,d\,(d-2)\,w^\frac{d-2}{d+2}\)^2\!w^{-\frac4{d+2}}}\,.
\]
Using the Cauchy-Schwarz inequality, that is, by writing that
\begin{multline*}
\left[\,\isd{\big|\nabla w^\frac{d-2}{d+2}\big|^2}+\frac{1}4\,d\,(d-2)\isd{\big|w^\frac{d-2}{d+2}\big|^2}\,\right]^2\\
=\left[\,\isd{\(\mathcal L\,w^\frac{d-2}{d+2}-\frac{1}4\,d\,(d-2)\,w^\frac{d-2}{d+2}\)w^{-\frac2{d+2}}\,w^\frac{d}{d+2}}\,\right]^2\\
\le\isd{\(\mathcal L\,w^\frac{d-2}{d+2}-\frac{1}4\,d\,(d-2)\,w^\frac{d-2}{d+2}\)^2w^{-\frac4{d+2}}}\,\isd{w^\frac{2\,d}{d+2}}\,,
\end{multline*}
we conclude that
\[
\mathsf{Q}=\mathsf{J}^{\frac2d-1}\,\isd{\big|\nabla w^\frac{d-2}{d+2}\big|^2}
\]
is monotone decreasing, and hence
\[
\mathsf{H}''=\frac{\mathsf{J}'}{\mathsf{J}}\,\mathsf{H}'+\,2\,\mathsf{J}\,\Sd\,\mathsf{Q}'\le\frac{\mathsf{J}'}{\mathsf{J}}\,\mathsf{H}'\,.
\]
This establishes the proof of Lemma~\ref{Lem:Concavity} for any $d\ge3$. 

\section{Symmetry in Caffarelli-Kohn-Nirenberg inequalities}\label{Sec:CKNreview}

In this Appendix, we recall some known results concerning \emph{symmetry} and \emph{symmetry breaking} in the Caffarelli-Kohn-Nirenberg inequalities~\eqref{Ineq:CKN}.
\begin{prop}\label{Prop:Symmetry} Assume that $d\ge2$. There exists a continuous function $\alpha:(2,2^*)\to(-\infty,0)$ such that $\lim_{p\to2^*}\alpha(p)=0$ for which the equality case in~\eqref{Ineq:CKN} is not 	achieved among radial functions if $a<\alpha(p)$ while for $a<\alpha(p)$ equality is achieved by
\[
u_\star(x):=\(1+|x|^{\frac2\delta\,(a_c-a)}\)^{-\delta}\quad\forall\,x\in\R^d
\]
where $\delta=\frac{a_c+b-a}{1+a-b}$. Moreover the function $\alpha$ has the following properties
\begin{enumerate}
\item[(i)] For any $p\in(2,2^*)$, $\alpha(p)\ge a_c-2\,\sqrt{\frac{d-1}{p^2-4}}$.
\item[(ii)] For any $p\in\big(2,2\,\frac{d^2-d+1}{d^2-3\,d+3}\big)$, $\alpha(p)\le a_c-\frac12\,\sqrt{\frac{(d-1)\,(6-p)}{p-2}}$.
\item[(iii)] If $d=2$, $\lim_{p\to2^*}\beta(p)/\alpha(p)=0$ where $\beta(p):=\alpha(p)-a_c+d/p$.
\end{enumerate}
\end{prop}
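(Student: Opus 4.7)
The plan is to define
\[
\alpha(p) := \inf\{a \in (-\infty, a_c) : u_\star \text{ is a minimizer of~\eqref{Ineq:CKN} at this value of } a\},
\]
with $u_\star$ the explicit radial candidate of the statement. That this set is nonempty (so $\alpha(p) < a_c$) will follow from (ii); that its complement is nonempty (so $\alpha(p) > -\infty$) will follow from (i). That $\alpha(p) < 0$ uses the Schwarz rearrangement argument valid at $a = 0$, for which symmetry of optimizers is classical. Continuity of $\alpha$ then reduces to the continuous (in fact real-analytic) dependence in $a$ of the explicit radial best constant --- computed, as in Lemma~\ref{lem:radial}, by an Emden--Fowler reduction to a one-dimensional Gagliardo--Nirenberg problem --- combined with the continuous dependence of the full best constant $\mathsf C_{a,b}$. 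Finally, $\alpha(p) \to 0$ as $p \to 2^*$ is forced by the two-sided bounds (i)--(ii), since both collapse to $a_c$ in that limit and we know $a_c - \alpha(p) \to a_c$.

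\textbf{Proof of (i) by linearization.} To show $\alpha(p) \geq a_c - 2\sqrt{(d-1)/(p^2-4)}$, I would prove that for $a$ below this threshold the radial candidate $u_\star$ is linearly unstable as a critical point of the Rayleigh quotient
\[
\mathcal R_{a,b}[u] := \ird{\frac{|\nabla u|^2}{|x|^{2a}}}\Big/\Bigl(\ird{\frac{|u|^p}{|x|^{bp}}}\Bigr)^{2/p}.
\]
After computing the second variation at $u_\star$, passing to spherical coordinates $(r,\omega) \in (0,\infty) \times \S^{d-1}$, and applying an Emden--Fowler-type change of variables adapted to the weight $|x|^{-2a}$, the quadratic form decouples on spherical harmonic modes $Y_k$ with eigenvalues $k(k+d-2)$. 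Each radial component reduces to a one-dimensional Pöschl--Teller-type operator with explicit spectrum. The radial sector $k=0$ is always stable (since $u_\star$ is extremal among radial functions by Lemma~\ref{lem:radial}), while the first non-radial mode $k=1$ yields a ground-state eigenvalue that changes sign exactly at $(a_c-a)^2 = 4(d-1)/(p^2-4)$. For $a$ past this threshold $u_\star$ is no longer a minimizer, so symmetry breaks and (i) follows.

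\textbf{Proof of (ii) by a weighted rigidity argument.} For the matching upper bound $\alpha(p) \leq a_c - \tfrac12\sqrt{(d-1)(6-p)/(p-2)}$ in the range $p \in (2, 2(d^2-d+1)/(d^2-3d+3))$, I would show that every positive minimizer is radial when $a$ lies above this threshold. The tool is a weighted carré du champ / Bakry--Emery computation along a fast diffusion flow adapted to the operator $\mathsf L_a = -\Delta + 2a\,x\cdot\nabla/|x|^2$ introduced in Section~\ref{Sec:CKN}. I would differentiate twice an entropy-type functional along the flow and, after integrations by parts that respect the weight, rewrite the dissipation as an integral of a sum of squares plus a pointwise quadratic form in the weighted Hessian of $u$. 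The algebraic range $p < 2(d^2-d+1)/(d^2-3d+3)$ is precisely the range in which the coefficients of this quadratic form can be tuned so that the form is pointwise non-negative, and the threshold in (ii) marks the boundary at which it degenerates. Strict monotonicity then forces any minimizer to be radial, proving~(ii).

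\textbf{Proof of (iii) and the main obstacle.} For $d=2$, $a_c = 0$ and $\beta(p) = \alpha(p) + 2/p$, so the claim $\beta(p)/\alpha(p) \to 0$ as $p\to 2^* = +\infty$ is equivalent to $p\,\alpha(p) \to -2$. The $\liminf$ direction follows from (i), which gives $\alpha(p) \geq -2/\sqrt{p^2-4}$ and hence $p\,\alpha(p) \geq -2p/\sqrt{p^2-4} \to -2$; the matching $\limsup$ is obtained by a sharp test-function argument, inserting into $\mathcal R_{a,b}$ a non-radial perturbation of $u_\star$ of exactly the Felli--Schneider form and computing the leading deficit as $p \to \infty$. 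The main obstacle in the whole scheme is step (ii): the weight $|x|^{-2a}$ breaks translation invariance and only partially preserves scaling, so the algebraic identities underlying the weighted Bakry--Emery scheme must be verified by hand, and extracting the exact exponent range $p < 2(d^2-d+1)/(d^2-3d+3)$ together with the sharp threshold requires careful bookkeeping of the cross-terms produced by the weight.
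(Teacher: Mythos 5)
You should first be aware that the paper does not prove this proposition at all: the paragraph following it explicitly presents it as a compilation of results quoted from the literature (existence from Catrina--Wang, part (i) from Felli--Schneider, the existence and continuity of the curve $p\mapsto\alpha(p)$ from \cite{DELT09}, part (ii) from \cite{DEL2011}, part (iii) from \cite{MR2437030}). So you are attempting something the authors deliberately delegate to five other papers, and your sketch should be judged on its own merits. Part (i) is fine in outline: the second-variation/spherical-harmonics decomposition with instability of the $k=1$ mode at $(a_c-a)^2=4\,(d-1)/(p^2-4)$ is exactly the Felli--Schneider argument. But your construction of $\alpha$ itself has a gap: defining $\alpha(p)$ as the infimum of the set of $a$ where the radial $u_\star$ is optimal gives symmetry breaking below $\alpha(p)$ for free, but the proposition also asserts that $u_\star$ is optimal for \emph{every} $a$ on the other side of $\alpha(p)$. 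That requires the symmetry region to be an interval in $a$ (a monotonicity statement, which is the actual content of \cite{DELT09}), and neither your infimum definition nor continuity of the two best constants delivers it.

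The clearest error is in (iii). Writing $\beta/\alpha\to0$ as $p\,\alpha(p)\to-2$ is correct, and the $\liminf$ direction does follow from (i). But the matching direction is an asymptotic \emph{upper} bound on $\alpha(p)$, i.e.\ a proof that radial functions \emph{are} optimal for $a$ slightly above $-2/p\,(1+o(1))$. A test-function computation with a non-radial perturbation can only ever show that the radial candidate is \emph{not} optimal, which pushes $\alpha(p)$ up, not down; your proposed ``sharp test-function argument'' therefore proves the wrong inequality, and what is actually needed is a genuine symmetry theorem in a region hugging the axis $b=0$ (this is what \cite{MR2437030} provides via the connection with Onofri-type inequalities). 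Part (ii) has a related weakness: it is likewise a symmetry statement, and your carr\'e du champ/flow sketch rests entirely on the unverified assertion that ``the coefficients can be tuned'' to make a quadratic form nonnegative exactly in the stated range of $p$. The source cited in the paper for (ii) obtains that threshold by comparing the inequality with Keller--Lieb--Thirring-type spectral estimates for linear Schr\"odinger operators, not by a nonlinear flow; and the flow/rigidity method, where it does work, yields the sharp Felli--Schneider threshold rather than the $\frac12\sqrt{(d-1)(6-p)/(p-2)}$ one, so there is no reason to expect your computation to land on (ii) as stated.
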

This result summarizes a list of partial results that have been obtained in various papers. Existence of optimal functions has been dealt with in \cite{Catrina-Wang-01}, while Condition (i) in Proposition~\ref{Prop:Symmetry} has been established in \cite{Felli-Schneider-03}. See \cite{DELT09} for the existence of the curve $p\mapsto\alpha(p)$, \cite{springerlink:10.1007/s00526-011-0394-y,Oslo} for various results on symmetry in a larger class of inequalities, and \cite{DEL2011} for Property (ii) in Proposition~\ref{Prop:Symmetry}. Numerical computations of the branches of non-radial optimal functions and formal asymptotic expansions at the bifurcation point have been collected in \cite{Freefem,1304}. The paper \cite{MR2437030} deals with the special case of dimension $d=2$ and contains Property (iii) in Proposition~\ref{Prop:Symmetry}, which can be rephrased as follows: the region of radial symmetry contains the region corresponding to $a\ge\alpha(p)$ and $b\ge\beta(p)$, and the parametric curve $p\mapsto(\alpha(p),\beta(p))$ converges to $0$ as $p\to2^*=\infty$ tangentially to the axis $b=0$. For completeness, let us mention that  \cite[Theorem~3.1]{MR1734159} covers the case $a>a_c-d/p$ also we will not use it. Finally, let us observe that in the \emph{symmetric case}, the expression of $\mathsf{C}_{a,b}$ can be computed explicitly in terms of the $\Gamma$ function as
\be{sharp}
\mathsf{C}_{a,b}= |\S^{d-1}|^{\frac{p-2}p} {\textstyle\left[\frac{(a-a_c)^2\,(p-2)^2}{p+2}\right]^\frac{p-2}{2\,p}\left[\frac{p+2}{2\,p\,(a-a_c)^2}\right]\left[\frac{4}{p+2}\right]^\frac{6-p}{2\,p}} \left[\tfrac{\Gamma\(\frac2{p-2}+\frac{1}2\)}{\sqrt\pi\;\Gamma\(\frac2{p-2}\)}\right]^\frac{p-2}p
\ee
where the volume of the unit sphere is given by $|\S^{d-1}|=2\,\pi^\frac{d}2/\,\Gamma\(\frac{d}2\)$.

\par\medskip\centerline{\rule{2cm}{0.2mm}}\medskip
\begin{spacing}{0.9}
\noindent{\small{\bf Acknowlegments.} This work has been partially supported by the projects \emph{STAB}, \emph{NoNAP} and \emph{Kibord} of the French National Research Agency (ANR). The authors warmfully thank Mr.~Nguyen Van Hoang, who found a mistake in a previous version of this paper. \par\medskip\noindent{\small\copyright\,2014 by the authors. This paper may be reproduced, in its entirety, for non-commercial purposes.}}\end{spacing}

\end{document}